\definecolor{DarkGreen}{rgb}{0,.55,0}
\definecolor{Yellow}{rgb}{.55,.55,0}
\definecolor{Red}{rgb}{.5,0,0}
\title{Fast polynomial transforms based on Toeplitz and Hankel matrices}
\author{Alex Townsend\thanks{Department of Mathematics, Massachusetts Institute of Technology, 77 Massachusetts Avenue
Cambridge, MA 02139-4307. (\texttt{ajt@mit.edu}) This work is supported by National Science Foundation grant No.~1522577.} 
\and Marcus Webb\thanks{Department of Applied Mathematics and Theoretical Physics, University of Cambridge, Wilberforce 
Road, Cambridge CB3 0WA, UK (\texttt{m.d.webb@maths.cam.ac.uk}) This work is supported by the UK Engineering and Physical Sciences Research Council (EPSRC) grant EP/H023348/1 for the University of Cambridge Centre for Doctoral Training, the Cambridge Centre for Analysis.} \and Sheehan Olver\thanks{School of Mathematics and Statistics, The University of Sydney, NSW 2006, Australia (\texttt{sheehan.olver@sydney.edu.au})}}
\date{\today}
\begin{document}
\maketitle

\begin{abstract} 
Many standard conversion matrices between coefficients in classical orthogonal polynomial expansions can be decomposed using
diagonally-scaled Hadamard products involving Toeplitz and Hankel matrices.  This allows us to derive
$\smash{\mathcal{O}(N(\log N)^2)}$ algorithms, based on the fast Fourier transform, for converting coefficients of a degree $N$ polynomial in one 
polynomial basis to coefficients in another. Numerical results show that this approach is competitive with state-of-the-art techniques, requires 
no precomputational cost, can be implemented in a handful of lines of code, and is easily adapted to extended precision arithmetic. 
\end{abstract}

\begin{keywords}
conversion matrix, Toeplitz, Hankel, Hadamard product 
\end{keywords}

\begin{AMS}
65T50, 65D05, 15B05
\end{AMS}

\section{Introduction}
Expansions of polynomials as finite series in orthogonal polynomial bases have applications 
throughout scientific computing, engineering, and physics~\cite{Askey_75_01,Don_94_01,Shen_94_01}. The most popular expansions  are in 
the Chebyshev and Legendre basis,
\begin{equation}
p_N(x) = \sum_{k=0}^N c_k^{cheb} T_k(x) = \sum_{k=0}^N c_k^{leg} P_k(x), \qquad x\in[-1,1],
\label{eq:pN}
\end{equation} 
where $p_N$ is a degree $N$ polynomial and $T_k$ and $P_k$ are the degree $k$ Chebyshev and Legendre 
polynomials, respectively. Chebyshev expansions are used because of their near-optimal 
approximation properties and associated fast transforms~\cite{Gentleman_72_01,Mason_02_01,Trefethen_13_01} and Legendre expansions for their $L^2$ 
orthogonality~\cite[Table~18.3.1]{NISTHandbook} as well as other recurrence relations that they satisfy~\cite{Hale_14_02}.
A useful working paradigm is to represent a smooth function on a finite interval using the 
Chebyshev basis and to convert to a different polynomial basis, such as Legendre, whenever 
it is algorithmically convenient to do so~\cite{Hale_14_02}.  It is therefore important to have fast transforms for converting a 
vector of coefficients in one polynomial basis to another.  

Given two sequences of orthogonal polynomials $\{\phi_0,\phi_1,\ldots\}$ and $\{\psi_0,\psi_1,\ldots\}$, 
there is an $(N+1)\times (N+1)$ upper-triangular {\em conversion matrix} (sometimes called the connection coefficients matrix), $A$, such that  
\[
\underline{\beta} = A\underline{\alpha},\qquad p_N(x) = \sum_{k=0}^N \alpha_k \phi_k(x)=\sum_{k=0}^N \beta_k \psi_k(x),
\]
where $\underline{\alpha} = (\alpha_0,\ldots,\alpha_N)^T$ and $\underline{\beta} = (\beta_0,\ldots,\beta_N)^T$. 
In this paper we describe how to compute the matrix-vector product $A\underline{\alpha}$
in $\smash{\mathcal{O}(N(\log N)^2)}$ operations when $\{\phi_0,\phi_1,\ldots\}$ and $\{\psi_0,\psi_1,\ldots\}$ are sets of 
standard orthogonal polynomials such as Chebyshev, Legendre (see Sections~\ref{sec:HankelPart} and~\ref{sec:cheb2leg}), ultraspherical (see Section~\ref{sec:UltraphericalConversions}), Jacobi (see Section~\ref{sec:JacobiConversions}), and Laguerre (see Section~\ref{sec:AdditionalConversions}). 


There are many existing fast algorithms for computing $A\underline{\alpha}$ that exploit a variety of structures, 
including: (1) A hierarchical off-diagonal low rank structure~\cite{Alpert_91_01}, (2) An eigenvalue 
decomposition involving a semiseparable matrix~\cite{Keiner_09_01,Keiner_11_01,Potts_98_01}, and (3) Trigonometric-like behavior of orthogonal polynomials via 
asymptotic expansions~\cite{Hale_14_01,Mori_99_01,Orszag_86_01,slevinsky2016use}.   In this paper, we exploit a new observation that all standard conversion matrices can be decomposed using diagonally-scaled Hadamard products involving Toeplitz and Hankel matrices.\footnote{A Toeplitz matrix, $T$, is a matrix that is constant along each diagonal, i.e., $T_{jk} = t_{j-k}$.  A Hankel matrix, $H$, is a matrix that is constant along each anti-diagonal, i.e., $H_{jk} = h_{j+k}$.} This structure allows us to derive fast 
FFT-based algorithms for computing $A\underline{\alpha}$ that can be implemented in any programming language in 
just a handful of lines of code.  We believe this algorithm is conceptually simpler than previous approaches, 
while being competitive in terms of computational speed (see Sections~\ref{sec:NumericalResults} and~\ref{sec:OtherConversions}). 

It is easy to see the structure that we exploit by considering an example. Consider the $(N+1)\times (N+1)$ 
Legendre-to-Chebyshev conversion matrix, which converts a vector of Legendre coefficients to Chebyshev coefficients 
for a degree $N$ polynomial. It is given by~\cite[eqn.~(2.18)]{Alpert_91_01}
\begin{equation}
\underline{c}^{cheb}  = M  \underline{c}^{leg}, \qquad 
   M_{jk} = \begin{cases} 
   \frac{1}{\pi}\Lambda\left(\frac{k}{2}\right)^2,& 0=j\leq k\leq N, \text{ } j \text{ even},\\ 
   \frac{2}{\pi}\Lambda\left(\frac{k-j}{2}\right)\Lambda\left(\frac{k+j}{2}\right),& 0<j\leq k\leq N, \text{ } k-j \text{ even},\\     
   0, & \text{otherwise},                
  \end{cases}
\label{eq:M}
\end{equation}
where $\Lambda(z) = \Gamma(z+1/2)/\Gamma(z+1)$, $\Gamma(z)$ is the gamma function, and 
$\underline{c}^{cheb} = (c_0^{cheb},\ldots,c_N^{cheb})^T$ and  $\underline{c}^{leg} = (c_0^{leg},\ldots,c_N^{leg})^T$
are the Chebyshev and Legendre coefficients of $p_N(x)$ in~\eqref{eq:pN}, respectively.  The explicit formula in~\eqref{eq:M} reveals that after a trivial diagonal scaling, the matrix $M$ can be written 
as a Hadamard product between an upper-triangular Toeplitz matrix, from the $\Lambda(\tfrac{k-j}{2})$ term, and a Hankel matrix, from the 
term $\Lambda(\tfrac{j+i}{2})$.  Thus, for the matrix $M$ in~\eqref{eq:M} we can write
\begin{equation}
M = D_1(T\circ H)D_2,
\label{eq:HadamardDecomposition}
\end{equation} 
where $D_1$ and $D_2$ are diagonal matrices, $T$ is an upper-triangular Toeplitz matrix, $H$ is a Hankel matrix, and
`$\circ$' is the Hadamard matrix product, i.e., entrywise multiplication between two matrices (see~\eqref{eq:Decomposition} for 
explicit formulas for $D_1$, $D_2$, $T$ and $H$). We find that the structure in~\eqref{eq:HadamardDecomposition} holds for 
many of the standard conversion matrices (see Section~\ref{sec:OtherConversions}). 

At first it is not obvious why the decomposition in~\eqref{eq:HadamardDecomposition} is useful for deriving a 
fast matrix-vector product because for general Toeplitz and Hankel matrices we are not aware of a fast algorithm 
for computing $(T\circ H)\underline{v}$.  However, for conversion matrices the Hankel matrix in~\eqref{eq:HadamardDecomposition} is 
often real and positive semidefinite and hence, is severely ill-conditioned~\cite{Beckermann_00_01}.
Theorem~\ref{thm:LowNumericalRank} shows that the matrix $H$ in~\eqref{eq:HadamardDecomposition} can be approximated, up to an accuracy 
of $\epsilon>0$, by a rank $K = \mathcal{O}(\log N\log(1/\epsilon))$ matrix.  In practice, we take $\epsilon$ as a small multiple of machine epsilon.

For an integer $K$, we construct a rank $K$ approximation of $H$
in $\mathcal{O}(K^2N)$ operations using the pivoted Cholesky algorithm (see Section~\ref{sec:PivotedCholesky}) to obtain 
the approximation,
\begin{equation}
H \approx \sum_{r=1}^K a_r \, \underline{\ell}_r\, \underline{\ell}_r^T.
\label{eq:LowRankHankel}
\end{equation} 
Since $K$ needs to be no larger than $\mathcal{O}(\log N\log(1/\epsilon))$ we can compute~\eqref{eq:LowRankHankel} in a total of 
$\mathcal{O}(N(\log N)^2\log(1/\epsilon)^2)$ operations. 
Moreover, using $(A\circ \underline{\ell}\,\underline{\ell}^T)\underline{v} = (D_{\underline{\ell}}AD_{\underline{\ell}})\underline{v}$, where $D_{\underline{\ell}} = \mathrm{diag}(\underline{\ell})$, we can write
\begin{equation}
\left(T\circ H\right) \underline{v} \approx \sum_{r=1}^K a_r \left( D_{\underline{\ell}_r}T D_{\underline{\ell}_r} \right)\underline{v},
\label{eq:Cdecomposition}
\end{equation} 
where each term is a diagonally-scaled Toeplitz matrix whose matrix-vector product can be computed in $\mathcal{O}(N\log N)$ operations via the fast 
Fourier transform (FFT)~\cite[Sec.~3.4]{Ng_04_01}. There are $K$ terms in~\eqref{eq:Cdecomposition}, so the matrix-vector product 
$\left(T\circ H\right) \underline{v}$ can be computed, up to an error of $\mathcal{O}(\epsilon)$, in $\mathcal{O}(KN\log N+K^2N)$ operations. For the 
majority of this paper we write algorithmic complexities without the explicit dependency on $\epsilon$.  

\begin{figure} 
\centering
\fbox{\parbox{.95\textwidth}{
 {\bf Outline for fast polynomial basis conversion: Computing $\underline{\beta} = A\underline{\alpha}$.}\\[5pt]
 \begin{tabular}{llc} 
  & & {\bf Cost} \\ 
 1. & Decompose $A$ into $A = D_1(T\circ H)D_2$ (see Section~\ref{sec:OtherConversions}) & - \\[1pt]
 2. & Calculate $H \approx \sum_{r = 1}^K a_r\, \underline{\ell}_r\,\underline{\ell}_r^T$ (see Section~\ref{sec:PivotedCholesky}) & $\mathcal{O}(N(\log N)^2)$\\[1pt]
 3. & Compute $\underline{v} = D_2 \underline{\alpha}$ & $\mathcal{O}(N)$\\[1pt]
 4. & Compute $\underline{w} = (T\circ H)\underline{v}$ using~\eqref{eq:Cdecomposition} and the FFT~\cite{Ng_04_01} & $\mathcal{O}(N(\log N)^2)$\\[1pt]
 5. & Compute $\underline{\beta} = D_1\underline{w}$ & $\mathcal{O}(N)$\\\cline{3-3}
 \rule{0pt}{1.3\normalbaselineskip} & & $\mathcal{O}(N(\log N)^2)$
 \end{tabular} 
}}
\caption{A summary of the $\mathcal{O}(N(\log N)^2)$ algorithm for converting between many standard orthogonal polynomial bases, including 
Chebyshev, Legendre, ultraspherical, Jacobi, and Laguerre polynomial bases. 
For the algorithmic complexity of the fourth step see Section~\ref{sec:HankelLowRank}.}
\label{fig:Summary}
\end{figure} 

The approach for fast polynomial basis conversion, as outlined above, requires no hierarchical data structures, 
no precomputional setup cost, and no matrix partitioning.  The fundamental step is a fast Toeplitz matrix-vector 
product and the cost of the matrix-vector product is precisely $\mathcal{O}(K)$ FFTs of size $2N-1$~\cite[Sec.~3.4]{Ng_04_01}. The fact that our 
algorithm relies on the FFT means that the implementation is automatically tuned to personal computer 
architectures, thanks to FFTW~\cite{Frigo_05_01}.   Our algorithm is now the default algorithm for polynomial basis conversion in 
the MATLAB software system called Chebfun~\cite{Chebfun} (see the commands {\tt leg2cheb}, {\tt cheb2leg}, {\tt ultra2ultra}, and {\tt jac2jac}) and the Julia package called ApproxFun.jl~\cite{ApproxFun}, via the package FastTransforms.jl~\cite{FastTransforms} (see commands {\tt leg2cheb} and {\tt cheb2leg}).\footnote{Note to the reviewer: The Chebfun commands are currently under code review and will be added soon. The FastTransforms.jl commands are currently available via GitHub in the development branch and will soon be 
available in a tagged version of the software system.}

The paper is structured as follows.  In Section~\ref{sec:FastMatVec} we describe a pivoted Cholesky algorithm for constructing 
low rank approximations of real, symmetric, and positive semidefinite matrices and use it to compute $(T\circ H)\underline{v}$ in $\mathcal{O}(N(\log N)^2)$ 
operations, where $T$ is an $(N+1)\times (N+1)$ Toeplitz matrix and $H$ is a real, symmetric, and positive semidefinite Hankel matrix. 
In Section~\ref{sec:HankelPart} we write $M = D_1(T\circ H)D_2$ as in~\eqref{eq:HadamardDecomposition} 
and show that the Hankel part, $H$, can be approximated, up to an error of $\epsilon>0$, by a 
rank $K = \mathcal{O}( \log N\log(1/\epsilon))$ matrix. In Section~\ref{sec:NumericalResults} we compare various algorithms 
for converting Legendre-to-Chebyshev basis conversion. In Section~\ref{sec:OtherConversions} we show that our algorithm 
allows for fast matrix-vector products involving many of the standard conversion matrices and we give numerical results 
throughout that section. 

\section{A fast matrix-vector product for certain Toeplitz-dot-Hankel matrices}\label{sec:FastMatVec}
We say that a matrix $A$ is a {\em Toeplitz-dot-Hankel} matrix if $A$ can be written as a Hadamard product of a Toeplitz and Hankel matrix, i.e., 
$A = T \circ H$, where $T$ is a Toeplitz matrix, $H$ is a Hankel matrix, and `$\circ$' denotes the Hadamard product. 
In this section, we suppose that $H$ is a real, symmetric, and positive semidefinite Hankel matrix and that it is approximated, up to an error of $\epsilon>0$, by a rank $K$ 
matrix.   Later, in Section~\ref{sec:HankelPart} we show that this holds for the 
Hankel part of the Legendre-to-Chebyshev conversion matrix in~\eqref{eq:M} when $K = \mathcal{O}( \log N\log(1/\epsilon) )$. 

Once we have constructed the low rank approximation in~\eqref{eq:LowRankHankel}, costing $\mathcal{O}(K^2N)$ operations, a fast matrix-vector product is immediate as~\eqref{eq:Cdecomposition} shows that $(T\circ H)\underline{v}$ can be computed as a sum of matrix-vector 
products involving a Toeplitz matrix. 
 
\subsection{Low rank approximation of a real positive semidefinite matrices}\label{sec:PivotedCholesky}
Suppose that $H$ is approximated up to an error of $\epsilon$ by a rank $K$ matrix. We would like to compute a rank $K$ 
approximation to $H$, i.e., $a_1,\ldots,a_K$ and $\underline{\ell}_1,\ldots,\underline{\ell}_K$ in~\eqref{eq:LowRankHankel}.  

In principle, one could construct a best rank $K$ approximation of $H$ by computing the singular value decomposition (SVD) of $H$ 
and taking the first $K$ left and right singular vectors as well as the first $K$ singular values. Naively, this costs a prohibitive $\mathcal{O}(N^3)$ operations, 
which can be reduced to $\mathcal{O}(N^2\log N)$ operations if the Hankel structure is exploited~\cite{Xu_08_01}.   
Instead, we describe an algorithm that costs only $\mathcal{O}( K^2 N )$ operations based on a pivoted Cholesky algorithm. It 
can be applied to any real, symmetric, and positive semidefinite matrix and does not exploit the Hankel structure of $H$. In practice, 
if the singular values of $H$ decay rapidly, then the rank $K$ approximation constructed by our pivoted Cholesky algorithm 
is observed to be near-best (see Figure~\ref{fig:CompareCholeskyAndSVD}). 
Our algorithm is very similar to the pivoted Cholesky algorithm described in~\cite{Harbrecht_12_01}, except we avoid square roots 
and have a different stopping criterion. One can trace back the origin of our algorithm to the rank revealing literature~\cite{Gu_04_01,Higham_90_01}.  

Set $H^{(0)} = H$ and assume that $H$ is a nonzero, real, symmetric, and positive semidefinite matrix. First, the 
maximum on the diagonal of $H$ is selected, say $H_{p_1p_1}$, which is also the absolute global maximum entry 
of $H$~\cite[Thm.~4.2.8]{Golub_13_01}. Then, one step of the Cholesky algorithm is performed with 
the entry $(p_1,p_1)$ as the pivot, i.e., 
\begin{equation}
H^{(1)} = H^{(0)} - H_{:,p_1}^{(0)}H_{p_1,:}^{(0)}/H_{p_1p_1}^{(0)},
\label{eq:FirstCholeskyStep}
\end{equation} 
where $H_{:,p_1}^{(0)}$ and $H_{p_1,:}^{(0)}$ denotes the $p_1$th column and $p_1$th row of $H^{(0)}$, respectively. 
The matrix $H^{(1)}$ has a zero $p_1$th column and $p_1$th row. 
The Cholesky step in~\eqref{eq:FirstCholeskyStep} is closely related
to the Schur complement of the $(p_1,p_1)$ entry in $H^{(0)}$ and the matrix $H^{(1)}$ is guaranteed to be real, symmetric, and positive 
semidefinite. Furthermore, the rank of $H^{(1)}$ is exactly one less than $H^{(0)}$~\cite[Cor.~19.2]{Matsaglia_74_01}. 

Next, the maximum on the diagonal of $H^{(1)}$ is found, say $H_{p_2p_2}^{(1)}$. If $H_{p_2p_2}^{(1)} = 0$ or is sufficiently small, then the 
process is terminated; otherwise, another Cholesky 
step is taken with the entry $(p_2,p_2)$ as the pivot, i.e.,
\[
H^{(2)} = H^{(1)} - H_{:,p_2}^{(1)}H_{p_2,:}^{(1)}/H_{p_2p_2}^{(1)}. 
\]
Again, the matrix $H^{(2)}$ is guaranteed to be real, symmetric, and positive semidefinite and has a rank that is 
exactly one less than $H^{(1)}$.  The pivoted Cholesky algorithm continues until the maximum on diagonal is 
either zero or sufficiently small. Since the rank of $H$ is at most $N$ and the rank decreases by precisely one 
after each Cholesky step, the algorithm terminates in at most $N$ steps.  For the algorithm to be 
computationally more efficient than the SVD one hopes to need just $K\ll N$ steps.   

Suppose that the pivoted Cholesky algorithm takes $K$ steps before terminating. Since $H^{(0)},\ldots, H^{(K-1)}$ are 
symmetric matrices, we can write the $r$th Cholesky step as follows: 
\[
H^{(r)} = H^{(r-1)} - a_r \, \underline{\ell}_r\,\underline{\ell}_r^T, \qquad \underline{\ell}_r = H^{(r-1)}_{:,p_{r}}, \quad a_r = (H^{(r-1)}_{p_{r}p_{r}})^{-1}. 
\]
Therefore, we use the $r$th pivoted Cholesky step to construct the $r$th term in~\eqref{eq:LowRankHankel}, where after $K$ steps 
the rank $K$ approximation to $H$ is constructed.  

The pivoted Cholesky algorithm described so far requires a total of $\mathcal{O}(K N^2)$ operations
because at each step an $(N+1)\times (N+1)$ matrix is updated.  Now we will describe how to construct the same rank $K$ 
approximant to $H$ in $\mathcal{O}(K^2N)$ operations, which is a significant saving when $K\ll N$.  

The main idea to reduce the computational cost is to note that it is not necessary to update the 
whole matrix at each Cholesky step.  For example, consider the Hankel part, $H$, of the matrix $M$ in~\eqref{eq:M} for $N = 300$. 
Figure~\ref{fig:PivotedCholeskyLocations} shows the pivot locations selected by the pivoted Cholesky algorithm, where  
a total of $27$ steps was required to construct a low rank approximant of $H$ that is accurate up to double precision. 
This means that only $27$ columns from $H$ are required to compute~\eqref{eq:LowRankHankel} (see black vertical lines on Figure~\ref{fig:PivotedCholeskyLocations}).  Therefore, we 
rewrite the pivoted Cholesky algorithm so that it only updates the diagonal entries (required to determine the pivot locations) and those
$27$ columns of $H$.  This allows for a significant computational saving when $K\ll N$.  

\begin{figure} 
\centering 
\begin{overpic}[width=.6\textwidth]{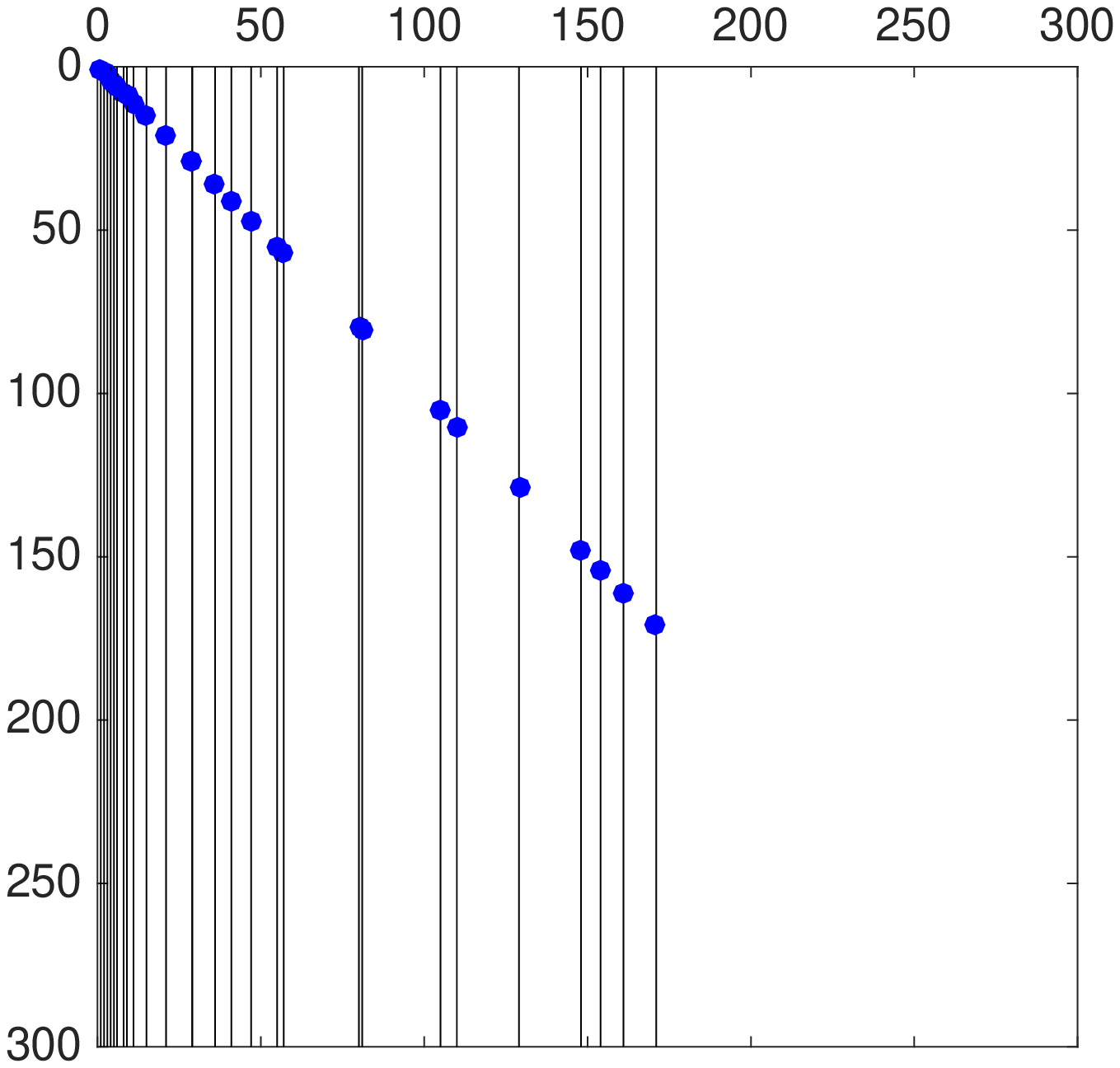} 
\end{overpic} 

\vspace{-.5cm}

\caption{The pivoted Cholesky algorithm applied to the Hankel part of the Legendre-to-Chebyshev conversion matrix in~\eqref{eq:M} when $N = 300$. The Cholesky algorithm takes $27$ pivots (blue dots) and evaluates the matrix at the $27$ corresponding columns (black lines). 
One can see that the majority of the entries in the Hankel part of $M$ are not required during the pivoted Cholesky algorithm, allowing 
for the computational cost to be reduced from $\mathcal{O}(N^2)$ to $\mathcal{O}(N)$ operations.} 
\label{fig:PivotedCholeskyLocations} 
\end{figure} 

Let $\underline{d} = \left(H_{00},\ldots,H_{NN}\right)^T$ be the diagonal entries of $H$.  In the first step, instead of~\eqref{eq:FirstCholeskyStep}, 
we only update the diagonal as follows:
\[
\underline{d} \quad \longleftarrow \quad \underline{d}  - (H_{:,p_1}^{(0)})^2/H_{p_1p_1}^{(0)},
\] 
where we have used the fact that $H^T = H$. The diagonal can then be used to determine the location of the second pivot. 
For the second step, we again only update the diagonal,
\[
\underline{d} \quad \longleftarrow \quad \underline{d}  - (H_{:,p_2}^{(1)})^2/H_{p_2p_2}^{(1)},
\]
where  $(H_{:,p_2}^{(1)})^2$ means that the vector $H_{:,p_2}^{(1)}$ is squared entry-by-entry. 
Since the pivot locations at each step are determined by the diagonal entries, one can select the pivoting entries 
by only updating the diagonal vector $\underline{d}$. At the $r$th Cholesky step the column $H_{:,p_r}^{(r-1)}$
is required, which is not directly available from the matrix $H$. We calculate this by first constructing 
$H_{:,p_r}$ and by applying each of the previous $r-1$ Cholesky steps to $H_{:,p_r}$ (see also~\cite{Harbrecht_12_01}).  
Figure~\ref{fig:CholekyDiagram} presents a summary of the algorithm. A simple operation 
count reveals that the algorithm costs $\mathcal{O}(K^2N)$ operations.    

\begin{figure} 
\centering
\fbox{\parbox{.95\textwidth}{
 {\bf Pivoted Cholesky algorithm for real, symmetric, and positive semidefinite matrices}\\[5pt]
 {\bf Input:} A real, symmetric, and positive semidefinite matrix, $H$, and an accuracy goal $\epsilon>0$. \\[2pt]
 {\bf Output:} Vectors $\underline{\ell}_1,\ldots,\underline{\ell}_K$ and $a_1,\ldots,a_K$ such that $H \approx \sum_{r=1}^K a_r\,\underline{\ell}_r\,\underline{\ell}_r^T$. \\[2pt]
  {\bf Set} $\underline{d} = (H_{11},\ldots,H_{NN})^T$, the diagonal of $H$ \\[2pt]
 {\bf for} $r = 1,\ldots, K$ \\[2pt]
  \-\hspace{1cm}       $ p_r = \arg\max_{1\leq j \leq N} d_j$ \\[2pt]
  \-\hspace{1cm}       {\bf Set} $\underline{\ell}_r = H_{:,p_r}$\\[2pt]
  \-\hspace{1cm}	   {\bf for} $j = 1,\ldots, r-1$   \\[2pt]      
  \-\hspace{2cm}          $\underline{\ell}_r \quad \longleftarrow \quad \underline{\ell}_r - \underline{\ell}_j (\underline{\ell}_r)_j /(\underline{\ell}_r)_{p_r}$\\[2pt]
  \-\hspace{1cm}     {\bf end}\\[2pt]
  \-\hspace{1cm}     {\bf Set} $a_{r} = (\underline{\ell}_r)_{p_r}$ \\[2pt] 
  \-\hspace{1cm}      $\underline{d} \quad \longleftarrow \quad \underline{d} - (\underline{\ell}_{r})^2/a_r$\\[2pt]
 {\bf end}
}}
\caption{Pseudocode for pivoted Cholesky algorithm, which costs $\mathcal{O}(K^2N)$ operations. 
This algorithm only requires the matrix $H$ to be real, symmetric, and positive semidefinite, not necessarily Hankel.
It can be shown that provided the singular values $\sigma_1,\ldots,\sigma_{N+1}$ of $H$ decay at a 
geometric rate that is faster than $\smash{4^{-k}}$, then the constructed rank $K$ approximant converges geometrically 
to $H$~\cite[Thm.~3.2]{Harbrecht_12_01}.}
\label{fig:CholekyDiagram}
\end{figure} 

One may be legitimately concerned that the low rank approximants constructed by the pivoting Cholesky algorithm in Figure~\ref{fig:CholekyDiagram}
are of poor quality, as they are not strictly best low rank approximants. More precisely, suppose that $H_K$ is the best rank $K$ approximant of $H$ in the matrix $2$-norm computed via the SVD 
and $\tilde{H}_K = \sum_{r=1}^K a_r\,\underline{\ell}_r\,\underline{\ell}_r^T$ is constructed via the pivoted Cholesky algorithm. 
    The best mathematical statement we know of is~\cite[Thm.~3.2]{Harbrecht_12_01}, which states that, provided that the singular values $\sigma_1,\ldots,\sigma_{N+1}$ of $H$ decay at a 
geometric rate that is faster than $\smash{4^{-k}}$, then the constructed rank $K$ approximant converges geometrically 
to $H$.  However, the assumptions of their theorem are considered
to be much stronger than necessary (see~\cite[Chap.~4]{Townsend_15_01}).  Is it possible that $\|H - H_K\|_2 \ll \|H-\tilde{H}_K\|_2$ otherwise? When $H$ has moderately decaying singular values, we believe not. 
One representative numerical experiment is shown in Figure~\ref{fig:CompareCholeskyAndSVD} 
(left), where the low rank approximants constructed by the pivoted Cholesky algorithm are compared against those from the SVD 
for the Hankel part of the matrix $M$ in~\eqref{eq:M} when $N = 1000$.  
All other investigations have revealed similar results~\cite{Harbrecht_12_01,Townsend_15_01}. 
A precise theorem that adequately describes the power of the pivoted Cholesky algorithm for constructing low rank approximants is 
a remaining mathematical challenge and may require a much deeper understanding on the numerical stability of Gaussian elimination. 
In principle, we could have presented a variant on the pivoted Cholesky algorithm to ensure that it is a
so-called strong rank-revealing algorithm~\cite{Gu_04_01}. However, this makes the pivoted Cholesky algorithm have a 
cost of $\mathcal{O}(N^2)$ operations and such a modification seems unnecessary in practice here.

\begin{figure} 
\begin{minipage}{.49\textwidth} 
\begin{overpic}[width=\textwidth]{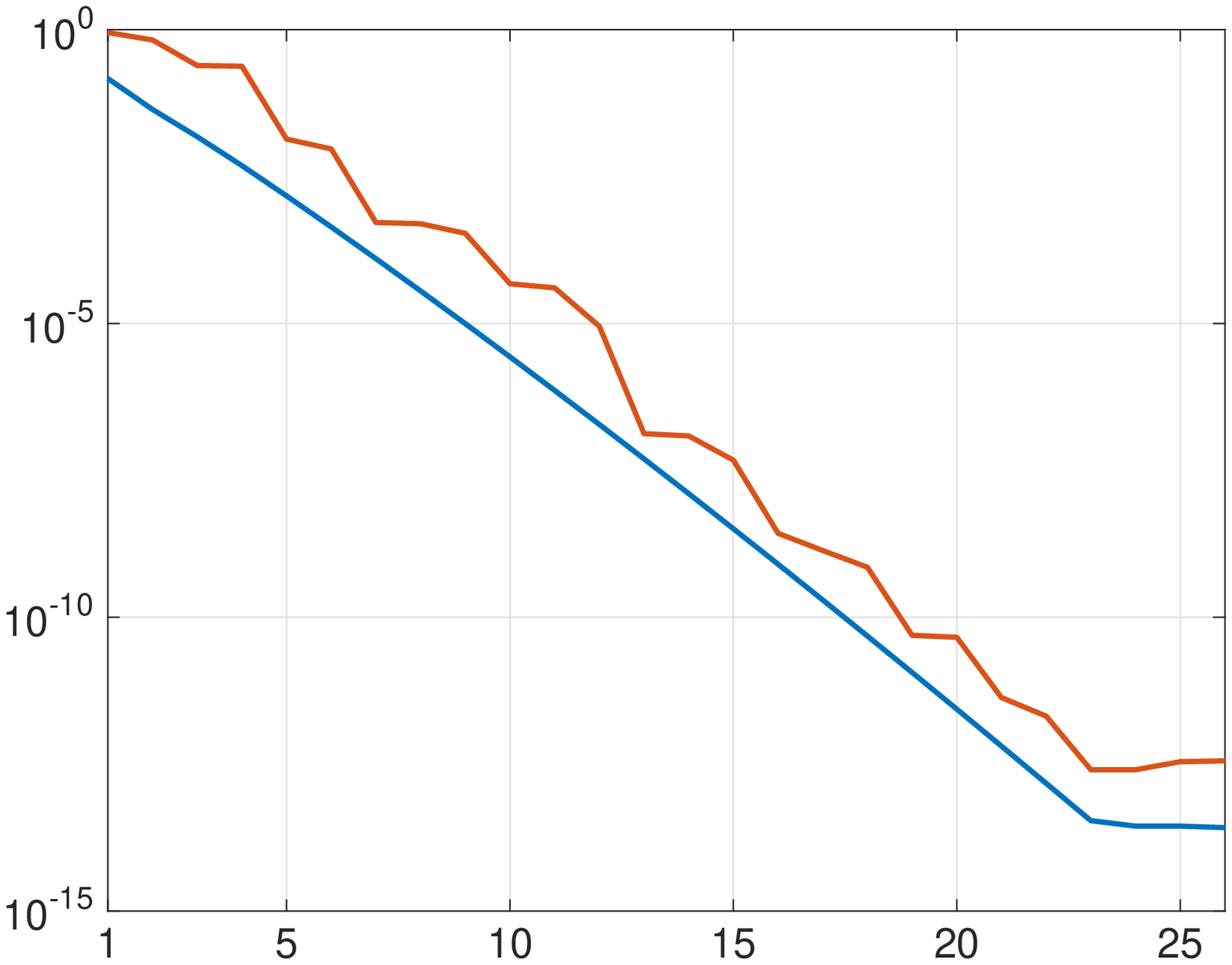}
\put(43,55) {\rotatebox{-40}{pivoted Cholesky}}
\put(45,35) {\rotatebox{-40}{SVD}}
\put(0,25) {\rotatebox{90}{$2$-norm error}} 
\put(47,0) {Rank} 
\put(10,73) {Cholesky's near-best approximants} 
\end{overpic} 
\end{minipage} 
\begin{minipage}{.49\textwidth} 
\begin{overpic}[width=\textwidth]{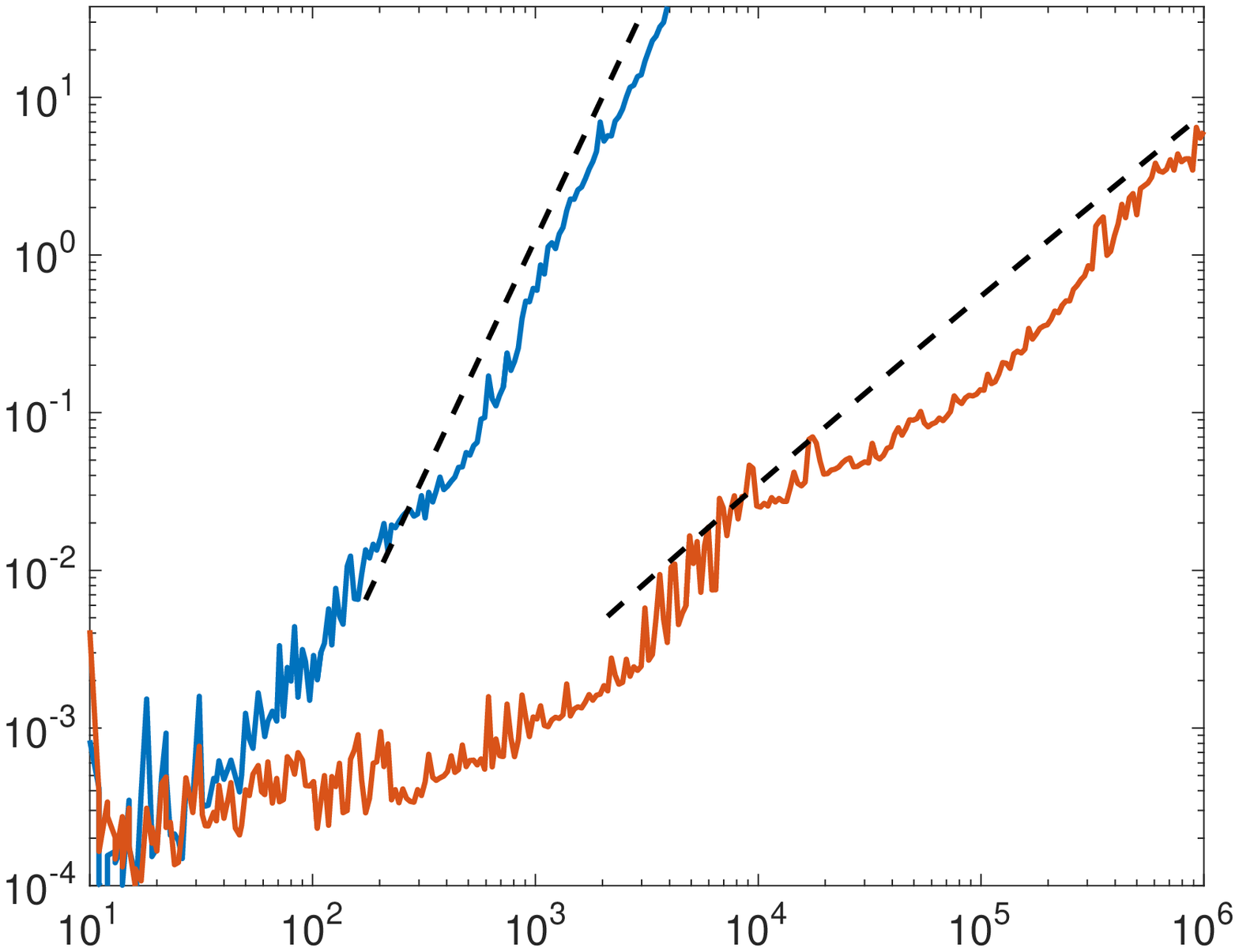}
\put(55,40) {\rotatebox{40}{$\mathcal{O}(N(\log N)^2)$}}
\put(32,45) {\rotatebox{63}{$\mathcal{O}(N^3)$}}
\put(57,23) {\rotatebox{40}{pivoted Cholesky}}
\put(45,45) {\rotatebox{63}{SVD}}
\put(20,73) {Cholesky's execution time}
\put(50,0) {$N$} 
\put(0,10) {\rotatebox{90}{Execution time (seconds)}} 
\end{overpic} 
\end{minipage} 
\caption{Left: The low rank approximants constructed by the pivoted Cholesky are often near-best. Here, the Hankel part of $M$ in~\eqref{eq:M} 
is considered when $N = 1000$ and the $2$-norm error between the SVD and pivoted Cholesky rank $K$ approximants is compared.  
Right: The pivoted Cholesky algorithm is computationally more efficient than the SVD at constructing low rank approximants. Here, the execution 
times are compared for the Hankel part of $M$ when $10\leq N\leq 10^6$. At each $N$ the algorithms construct a low rank approximant that 
is accurate to essentially double precision. Since $K = \mathcal{O}( \log N )$ for the Hankel part of $M$, the cost of the pivoted Cholesky algorithm is $\mathcal{O}( N(\log N)^2 )$ operations, while the cost of the SVD is $\mathcal{O}(N^3)$ operations.}
\label{fig:CompareCholeskyAndSVD}
\end{figure} 

Our pivoted Cholesky algorithm costs $\mathcal{O}(K^2N)$ operations. In Figure~\ref{fig:CompareCholeskyAndSVD} (right) we compare the 
execution time for computing the SVD and the pivoted Cholesky algorithm on the Hankel part of the matrix $M$ in~\eqref{eq:M} for 
obtaining an accuracy of essentially double precision. One can see that even when $N = 10^6$, the pivoted Cholesky algorithm can be 
employed to construct~\eqref{eq:LowRankHankel}, whereas the SVD is limited to $N\leq 5000$. In some applications the size of the matrix $M$ is 
fixed and one wishes to convert between two polynomial bases for many different polynomials of the same degree.  In such a situation, the pivoted Cholesky 
algorithm can construct a low rank approximation for the Hankel part just once and it can be reused for each matrix-vector product. 


%
%

\section{The Hankel part of the Legendre-to-Chebyshev matrix}\label{sec:HankelPart} 
Let $N$ be an integer. The $(N+1)\times (N+1)$ Legendre-to-Chebyshev matrix, denoted by $M$, in~\eqref{eq:M} 
can be written as a diagonal-scaled Toeplitz-dot-Hankel matrix. That is, $M = D_1(T\circ H)D_2$, where $D_1 = \tfrac{2}{\pi}{\rm diag}(\tfrac{1}{2},1,\ldots,1)$, $D_2=I_{N+1}$ the $(N+1)\times(N+1)$ identity matrix, and
\begin{equation}
\begin{aligned}
T_{jk} &= \begin{cases} \Lambda\!\left(\frac{k-j}{2}\right), & 0\leq j\leq k\leq N, k-j \text{ even},\\ 0, & \text{otherwise},\end{cases} \\
H_{jk} &=  \Lambda\!\left(\tfrac{j+k}{2}\right), \quad 0\leq j,k\leq N. 
\label{eq:Decomposition} 
\end{aligned}
\end{equation}
Here $\Lambda(z) = \Gamma(z+1/2)/\Gamma(z+1)$, where $\Gamma(z)$ is the gamma function.  In this section we show that 
$H$ is: (1) real, symmetric, and positive semidefinite (see Section~\ref{sec:HankelPosDef}) and (2) $H$ can be approximated, up to an accuracy of $\epsilon>0$, 
by a rank $K = \mathcal{O}(\log N\log(1/\epsilon))$ matrix (see Section~\ref{sec:HankelLowRank}).  

\subsection{The Hankel part of the Legendre-to-Chebyshev matrix is positive semidefinite}\label{sec:HankelPosDef} 
The Hankel matrix $H$ in~\eqref{eq:Decomposition} is immediately seen to be real and symmetric. 
To show that it is positive semidefinite, we recall that the Hamburger moment problem states that a real 
Hankel matrix is positive semidefinite if and only if it is associated to a nonnegative Borel measure 
supported on the real line. 
\begin{lemma}[Hamburger Moment Problem]\label{lem:hamburger}
 A real $(N+1) \times (N+1)$ Hankel matrix, $H$, is positive semidefinite if and only if there exists a 
 nonnegative Borel measure $\mu_H$ supported on the real line such that
 \begin{equation}
  H_{jk} = \int_{-\infty}^{\infty} x^{j+k} \mathrm{d}\mu_H(x), \qquad 0\leq j,k\leq N.
  \label{eq:moments}
 \end{equation}
 \end{lemma}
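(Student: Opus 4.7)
The plan is to prove both directions separately. The forward (``if'') implication is routine and I would handle it first. Given a nonnegative measure $\mu_H$ representing the entries of $H$, any real vector $\underline{v} = (v_0,\ldots,v_N)^T$ satisfies
\[
\underline{v}^T H \underline{v} = \sum_{j,k=0}^N v_j v_k \int_{-\infty}^{\infty} x^{j+k}\,\mathrm{d}\mu_H(x) = \int_{-\infty}^{\infty} \left(\sum_{j=0}^N v_j x^j\right)^{\!2} \mathrm{d}\mu_H(x) \geq 0,
\]
so $H$ is positive semidefinite. The only subtlety is the existence of the moments, which follows from the fact that each $H_{jk}$ is assumed finite and real.

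The reverse (``only if'') implication is the finite (truncated) Hamburger moment problem, which I would prove constructively via orthogonal polynomials and Gaussian quadrature. Assume first that $H$ is strictly positive definite. Then the bilinear form $\langle p, q\rangle_H := \underline{p}^T H \underline{q}$ on coefficient vectors of polynomials of degree $\leq N$ is a genuine inner product. Applying Gram--Schmidt to the monomials $1,x,\ldots,x^N$ produces orthonormal polynomials $p_0,\ldots,p_N$ satisfying a symmetric three-term recurrence $xp_k(x) = b_{k-1}p_{k-1}(x) + a_k p_k(x) + b_k p_{k+1}(x)$. The associated $(N+1)\times(N+1)$ symmetric tridiagonal Jacobi matrix $J$ has real eigenvalues $\lambda_1,\ldots,\lambda_{N+1}$ with orthonormal eigenvectors $\underline{q}_1,\ldots,\underline{q}_{N+1}$. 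Taking
\[
\mu_H = \sum_{i=1}^{N+1} w_i\,\delta_{\lambda_i}, \qquad w_i = H_{00}\,(\underline{q}_i)_1^2,
\]
the standard Gauss quadrature identity, together with the fact that products $x^{j+k}$ with $j+k\leq 2N$ can be expanded in the orthonormal basis, gives exactly $\int x^{j+k}\mathrm{d}\mu_H(x) = H_{jk}$ for $0\leq j,k\leq N$. The weights $w_i$ are nonnegative, so $\mu_H$ is a nonnegative Borel measure.

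The main obstacle is the degenerate case where $H$ is only positive semidefinite with rank $r < N+1$, since Gram--Schmidt breaks down. I would handle this by a limiting argument: for each $\varepsilon>0$, the matrix $H+\varepsilon I$ is positive definite with entries $H_{jk}+\varepsilon\delta_{jk}$, so by the above construction it admits a representing measure $\mu_{\varepsilon}$ supported on at most $N+1$ points contained in a compact interval (the eigenvalues of the corresponding Jacobi matrix are uniformly bounded since the moments vary continuously in $\varepsilon$). Passing to a weak-$*$ subsequential limit as $\varepsilon\to 0$ yields a nonnegative measure $\mu_H$ whose first $2N+1$ moments match those of $H$. Alternatively one can argue directly: if $H$ has rank $r$, then $H$ is determined by its leading $r\times r$ principal minor, which is positive definite, and the resulting $r$-point Gauss measure automatically reproduces all entries of $H$ by the recurrence relations. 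Either route completes the proof.
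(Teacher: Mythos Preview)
The paper does not actually prove this lemma; it simply cites Theorem~7.1 of Peller's book. Your proposal is therefore more ambitious than what the paper does. The ``if'' direction is correct, and the positive-definite branch of the ``only if'' direction is essentially the standard Gauss-quadrature construction, with one small omission: forming the full $(N{+}1)\times(N{+}1)$ Jacobi matrix requires the diagonal entry $a_N = \langle xp_N, p_N\rangle_H$, which involves the moment $m_{2N+1}$, and this is not encoded by $H$ (whose entries determine only $m_0,\ldots,m_{2N}$). This is harmless---any real choice of $a_N$ yields an $(N{+}1)$-point quadrature that still reproduces $m_0,\ldots,m_{2N}$---but it should be stated.

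The genuine gap is in your limiting argument for the merely semidefinite case: $H+\varepsilon I$ is \emph{not} a Hankel matrix. A Hankel matrix must have entries depending only on $j+k$, but for instance $(H+\varepsilon I)_{0,2} = H_{0,2}$ while $(H+\varepsilon I)_{1,1} = H_{1,1} + \varepsilon = H_{0,2}+\varepsilon$. Hence the positive-definite construction does not apply to $H+\varepsilon I$, and the weak-$*$ limit argument collapses. A correct perturbation is $H + \varepsilon G$ where $G$ is the (positive definite) Hankel matrix of moments of a fixed measure such as the standard Gaussian; then $H+\varepsilon G$ is both Hankel and positive definite, and your compactness argument goes through. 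Your alternative route via the leading $r\times r$ principal minor is also salvageable, but as written it asserts without proof that this minor is positive definite and that the resulting $r$-atomic measure reproduces all of $H$; both facts hold for positive semidefinite Hankel matrices, but each requires a short argument exploiting the Hankel structure.
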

 \begin{proof}
  For a proof, see~\cite[Thm.~7.1]{Peller_12_01}. 
 \end{proof}

We show that the Hankel matrix in~\eqref{eq:Decomposition} is positive semidefinite 
by expressing its entries in the form of~\eqref{eq:moments}. 
\begin{theorem}
The Hankel matrix, $H$, in~\eqref{eq:Decomposition} is positive semidefinite. 
\label{thm:PosDefHankelMatrix} 
\end{theorem}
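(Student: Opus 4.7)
The plan is to invoke Lemma~\ref{lem:hamburger} directly: it suffices to exhibit a nonnegative Borel measure $\mu_H$ on $\mathbb{R}$ whose $n$th moment equals $\Lambda(n/2)$ for every $n\in\{0,1,\ldots,2N\}$, since $H_{jk}$ depends only on $n = j+k$.

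The key observation is that $\Lambda(n/2)$ is essentially a Beta integral. First I would rewrite
\[
\Lambda\!\left(\tfrac{n}{2}\right) \;=\; \frac{\Gamma\!\left(\tfrac{n}{2}+\tfrac{1}{2}\right)}{\Gamma\!\left(\tfrac{n}{2}+1\right)} \;=\; \frac{1}{\sqrt{\pi}}\,B\!\left(\tfrac{n}{2}+\tfrac{1}{2},\tfrac{1}{2}\right) \;=\; \frac{1}{\sqrt{\pi}}\int_0^1 t^{(n-1)/2}(1-t)^{-1/2}\,\mathrm{d}t,
\]
using $\Gamma(1/2)=\sqrt{\pi}$ in the numerator of the Beta function. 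Then the substitution $t = x^2$, $\mathrm{d}t = 2x\,\mathrm{d}x$ converts the fractional power $t^{(n-1)/2}$ into the integer power $x^n$, giving
\[
\Lambda\!\left(\tfrac{n}{2}\right) \;=\; \frac{2}{\sqrt{\pi}}\int_0^1 \frac{x^n}{\sqrt{1-x^2}}\,\mathrm{d}x,\qquad n\ge 0.
\]
Defining $\mathrm{d}\mu_H(x) = \tfrac{2}{\sqrt{\pi}}(1-x^2)^{-1/2}\mathbf{1}_{[0,1]}(x)\,\mathrm{d}x$ produces a nonnegative Borel measure supported on $[0,1]\subset\mathbb{R}$, and by construction $H_{jk} = \int x^{j+k}\,\mathrm{d}\mu_H(x)$ for $0\le j,k\le N$. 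Lemma~\ref{lem:hamburger} then delivers positive semidefiniteness.

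There is really no serious obstacle; the only bit that requires any thought is recognizing that $\Lambda(n/2)$ is a Beta value and that the substitution $t=x^2$ promotes half-integer moments in $t$ to integer moments in $x$, so that the same measure works uniformly in $n$. As a sanity check one can verify the two boundary cases $n=0$ (giving $\sqrt{\pi}$) and $n=1$ (giving $2/\sqrt{\pi}$) directly from the integral. If one prefers to avoid citing the moment-problem lemma, the same representation yields the bare-hands proof $v^T H v = \int\bigl(\sum_{j=0}^{N} v_j x^j\bigr)^2\,\mathrm{d}\mu_H(x)\ge 0$ for any $v\in\mathbb{R}^{N+1}$.
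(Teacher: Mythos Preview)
Your proof is correct and follows essentially the same approach as the paper: both establish the integral representation $H_{jk} = \tfrac{2}{\sqrt{\pi}}\int_0^1 x^{j+k}(1-x^2)^{-1/2}\,\mathrm{d}x$ and then invoke Lemma~\ref{lem:hamburger} with the measure $\mathrm{d}\mu_H(x) = \tfrac{2}{\sqrt{\pi}}\chi_{(0,1)}(1-x^2)^{-1/2}\,\mathrm{d}x$. The only difference is cosmetic: the paper simply asserts the integral identity (citing Tricomi--Erd\'elyi for such representations), whereas you derive it explicitly via the Beta function and the substitution $t=x^2$.
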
 
\begin{proof}
It can be verified that (see~\cite[Sec.~7]{Tricomi_57_01} for integral representations of ratios of gamma functions)
\[
H_{jk} = \frac{2}{\sqrt{\pi}} \int_0^1 x^{j+k}(1-x^2)^{-1/2} dx, \qquad 0\leq j,k\leq N. 
\]
By setting $d\mu_H(x) = (2/\sqrt{\pi})\chi_{(0,1)}(1-x^2)^{-1/2}dx$ in Lemma~\ref{lem:hamburger}, where $\chi_{(0,1)}$ is the characteristic function for the interval $(0,1)$, we conclude that $H$ is a positive semidefinite matrix. 
\end{proof} 

Theorem~\ref{thm:PosDefHankelMatrix} shows that $H$ is positive semidefinite and therefore, the pivoted Cholesky 
algorithm described in Section~\ref{sec:PivotedCholesky} is applicable. We now show that $H$ can be 
well-approximated by a rank $K$ matrix where $K\ll N$. 

\subsection{The Hankel part of the Legendre-to-Chebyshev matrix is numerically of low rank}\label{sec:HankelLowRank} 
In Section~\ref{sec:HankelPosDef} we showed that $H$ in~\eqref{eq:Decomposition} is real, symmetric, and positive semidefinite. 
Such Hankel matrices are severely ill-conditioned, and
Beckermann has proved the remarkably tight bound of $\kappa_2(H) \geq 0.0037 \times (33.97)^{N}/N$~\cite{Beckermann_00_01}, where 
$\kappa_2(H)$ denotes the condition number of $H$ in the matrix $2$-norm. This shows that $H$ is in fact exponentially ill-conditioned, so one might expect 
that the singularly values $\sigma_1(H),\ldots,\sigma_{N+1}(H)$ of $H$ decay geometrically to zero. Indeed they do decay exponentially, and in this section we show that $H$ can be approximated to an accuracy of $\epsilon>0$ by a 
rank $K = \mathcal{O}( \log N \log(1/\epsilon) )$ matrix.   

First, we extract out the first row of $H$. That is, 
\begin{equation}
H = \begin{bmatrix} H_{00}\!\! & \!\! \cdots\!\! & \!\!  H_{0N} \\[5pt] &\hat{H}&  \end{bmatrix}, 
\label{eq:SplitFirstRow}
\end{equation} 
where $\hat{H}$ is an $N\times (N+1)$ submatrix of $H$. Then, we view $\hat{H}$ as being obtained by sampling the 
function $\Lambda((x+y)/2)$ at the tensor grid $\{0,\ldots,N\}\times \{1,\ldots,N\}$. A similar strategy 
is employed by Grasedyck to show that the singular values of the Cauchy function decay geometrically to zero~\cite{Grasedyck_01_01}. 
We build a function $h(x,y)$ defined on $[0,N]\times [1,N]$ that is of rank $\mathcal{O}( \log N\log(1/\epsilon) )$ 
such that 
\[
\sup_{(x,y) \in [0,N]\times [1,N]} \left| \Lambda\left(\tfrac{x+y}{2}\right) - h(x,y) \right| \leq \epsilon. 
\]
By sampling $h(x,y)$ at the grid $\{0,\ldots,N\}\times \{1,\ldots,N\}$ we construct a matrix of 
rank $\mathcal{O}( \log N\log(1/\epsilon) )$ that approximates $\hat{H}$ to an accuracy of $\epsilon$. 
We construct $h(x,y)$ by geometrically dividing the domain $[0,N]\times [1,N]$ at $y = 1$ into domains
$[0,N]\times [1,\beta^{k} N],  [0,N]\times [\beta^k N,\beta^{k-1} N], \ldots, [0,N]\times [\beta N,N]$, 
where $1/2<\beta<1$ is a parameter to be selected (see Figure~\ref{fig:GeometricSubdivision}). 

\begin{figure} 
\centering
\includegraphics[width=.6\textwidth]{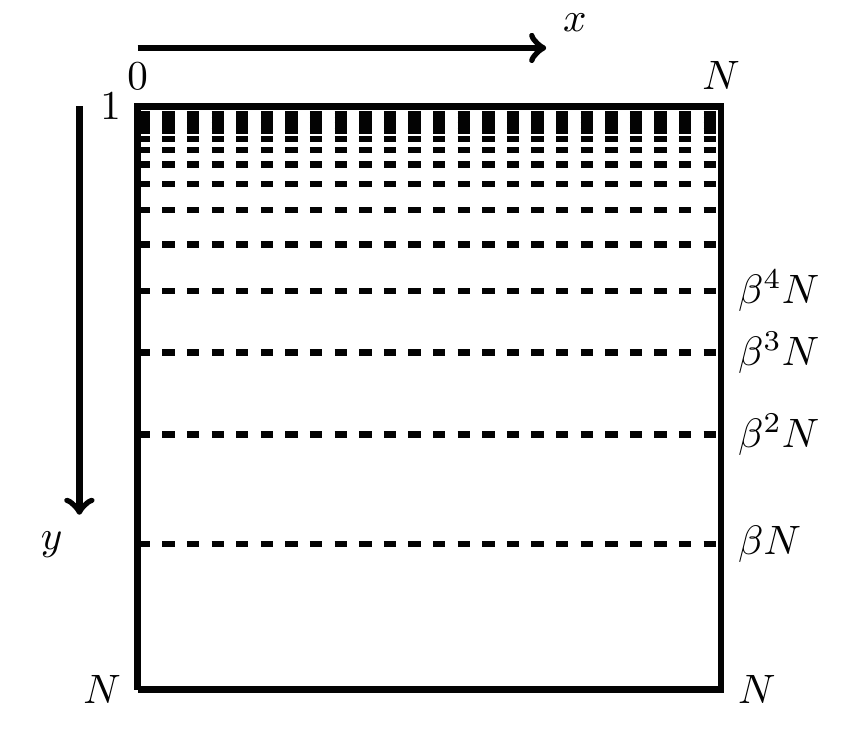}
\caption{The geometric subdivision of $[0,N]\times [1,N]$ that is used to show that the Hankel part, $H$, of $M$ in~\eqref{eq:M} can be approximated,
up to an error of $\epsilon$, by a rank $K = \mathcal{O}(\log N\log(1/\epsilon))$ matrix. First, we extract off the first row of $H$ and view the matrix $\hat{H}$ in~\eqref{eq:SplitFirstRow} as derived from sampling $\Lambda((x+y)/2)$ at equally-spaced points on $[0,N]\times [1,N]$.  Motivated by~\cite{Grasedyck_01_01}, we geometrically subdivide the domain $[0,N]\times [1,N]$ at $y = 1$ and bound the numerical rank of $\Lambda((x+y)/2)$ on each domain by using Taylor series expansions (see Lemma~\ref{lem:LowNumericalRankSections}).}
\label{fig:GeometricSubdivision} 
\end{figure} 

On each domain of the form $[0,N]\times [\beta^j N,\beta^{j-1} N]$ we can use Taylor expansions to relatively 
tightly bound the numerical rank of the function $\Lambda((x+y)/2)$. Later, we will sum these ranks together to 
obtain a bound on the numerical rank of $\Lambda((x+y)/2)$ on $[0,N]\times[1,N]$. 

Our first task is to control the error from a Taylor expansion and to do this we bound the $y$-derivative of $\Lambda((x+y)/2)$. 
\begin{lemma} 
Let $n\geq 0$ and $N>1$ be integers. Then, 
\[
\left| \frac{\partial^n}{\partial y^n}\Lambda\left(\tfrac{x+y}{2}\right)\right| \leq \frac{n!\, e}{y^n}, \qquad (x,y) \in [0,N]\times[1,N].
\]
\label{lem:BoundedDerivative} 
\end{lemma}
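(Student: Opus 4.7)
The plan is to avoid differentiating under the sign of a real integral and instead apply Cauchy's integral formula, which automatically produces the factor $n!/r^n$ once one picks a good contour radius $r$. The key observation is that $y \mapsto \Lambda((x+y)/2)$ extends to a function meromorphic in $\mathbb{C}$ with poles only at $y = -x-1, -x-3, \ldots$ (the preimages of the poles of $\Lambda$), so for any $y \in [1,N]$ and $x \in [0,N]$ the closed disk $|z-y| \leq y$ lies in the domain of holomorphy: its closest approach to a pole is at distance $(y+x+1) - y = x+1 > 0$.

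First I would establish the uniform bound $|\Lambda(w)| \leq \sqrt{\pi}$ on the closed right half-plane $\{\mathrm{Re}(w) \geq 0\}$. Using the Beta-function integral representation
\[
\Lambda(w) = \frac{2}{\sqrt{\pi}} \int_0^1 t^{2w}(1-t^2)^{-1/2}\,dt,
\]
which is valid for $\mathrm{Re}(w) > -1/2$ and underlies the identity invoked in the proof of Theorem~\ref{thm:PosDefHankelMatrix}, we move the absolute value inside and use $|t^{2w}| = t^{2\mathrm{Re}(w)} \leq 1$ for $t \in (0,1]$, giving
\[
|\Lambda(w)| \leq \frac{2}{\sqrt{\pi}} \int_0^1 (1-t^2)^{-1/2}\,dt = \sqrt{\pi}.
\]

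Next I apply Cauchy's integral formula on the circle $|z-y| = y$. Because $|z-y| = y$ forces $\mathrm{Re}(z) \in [0,2y]$, one gets $\mathrm{Re}((x+z)/2) = (x+\mathrm{Re}(z))/2 \geq x/2 \geq 0$, so by the previous step $|\Lambda((x+z)/2)| \leq \sqrt{\pi}$ everywhere on the contour. Cauchy's estimate for the $n$-th derivative then yields
\[
\left|\frac{\partial^n}{\partial y^n}\Lambda\!\left(\tfrac{x+y}{2}\right)\right| \leq \frac{n!\,\sqrt{\pi}}{y^n} \leq \frac{n!\,e}{y^n},
\]
since $\sqrt{\pi} < e$.

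The main obstacle is really just the choice of contour. A smaller radius than $y$ would not produce the $1/y^n$ decay, while a larger one would include points with $\mathrm{Re}((x+z)/2) < 0$, where the half-plane bound on $|\Lambda|$ no longer applies; taking $r = y$ exactly balances these constraints. The slack between $\sqrt{\pi}$ and $e$ simply reflects that the lemma's constant is chosen for convenience rather than sharpness, which is all that is needed for the Taylor-series argument in the next step.
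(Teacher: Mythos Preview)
Your proof is correct and follows essentially the same route as the paper: both apply Cauchy's estimate on the circle $|\zeta - y| = y$ and then bound $|\Lambda|$ on the right half-plane. The only difference is the half-plane bound itself---the paper quotes $|\Lambda(z)| \leq e/|z+1|$ from Alpert--Rokhlin and observes $|z+1|\geq 1$ on the contour, whereas you derive the (slightly sharper and more self-contained) bound $|\Lambda(w)|\leq \sqrt{\pi}$ directly from the Beta-integral representation.
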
 
\begin{proof}
Note that $\Lambda(z) = \Gamma(z+1/2)/\Gamma(z+1)$ is an analytic function in $\mathbb{C}$ 
except at the poles $z = -1/2, -3/2, \ldots$.  Therefore, 
for each fixed $y\in [1,N]$ the function $\Lambda((z+y)/2)$ is analytic except at  $z = -1/2-y, -3/2-y,\ldots$. 
Hence, $\Lambda((z+y)/2)$ is analytic in the disk in $\mathbb{C}$ of radius $y$ centered at $z = y$. (Note 
that it is also analytic in the disk of radius $y+1$ centered at $z = y$.)   

Directly from the Cauchy integral formula (see, for example,~\cite[Lemma~2.2]{Alpert_91_01}), we know that 
\[
\left| \frac{\partial^n}{\partial y^n}\Lambda\left(\tfrac{x+y}{2}\right)\right| \leq \frac{n!}{y^n} \sup_{0\leq \theta\leq 2\pi} \left| \Lambda\left(\tfrac{ye^{i\theta}+y}{2}\right) \right|. 
\]
Since $| \Lambda( z ) | \leq e/(|z+1|)$ for all $z\in\mathbb{C}$ such that ${\rm Re}(z)\geq 0$~\cite[Lemma~2.4]{Alpert_91_01}, we have
\[
\left| \frac{\partial^n}{\partial y^n}\Lambda\left(\tfrac{x+y}{2}\right)\right| \leq \frac{n!}{y^n} \sup_{0\leq \theta\leq 2\pi}\frac{e}{\left| \tfrac{ye^{i\theta}+y}{2}+1\right|}\leq \frac{n!\, e}{y^n},
\]
as required.
\end{proof} 

Now that we have a bound on the $y$-derivatives of $\Lambda\left(\tfrac{x+y}{2}\right)$, we can derive 
error bounds on a Taylor expansion of $\Lambda\left(\tfrac{x+y}{2}\right)$ in the $y$-variable 
on the domain $[0,N]\times [\beta^kN,\beta^{k-1}N]$. 

\begin{lemma} 
Let $\epsilon>0$, $k$ be an integer, and $\tfrac{1}{2}<\beta <1$ such that $\beta^kN > 1$. Then, there is a rank $K=\lceil \log(e/\epsilon)/\log(\beta/(1-\beta))\rceil$ function 
$g_k(x,y)$ on $[0,N]\times [\beta^kN,\beta^{k-1}N]$ such that 
\[
\left|\Lambda\left(\tfrac{x+y}{2}\right) - g_k(x,y)\right|  \leq \epsilon,\qquad (x,y)\in [0,N]\times [\beta^kN,\beta^{k-1}N].
\]
\label{lem:LowNumericalRankSections}
\end{lemma}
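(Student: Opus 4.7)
The plan is to construct $g_k(x,y)$ as a truncated Taylor expansion of $\Lambda((x+y)/2)$ in the $y$-variable, expanded around a fixed base point $y_0$ in the interval $[\beta^k N,\beta^{k-1} N]$. Since a degree $K-1$ Taylor polynomial in $y$ has the form $\sum_{j=0}^{K-1} a_j(x)(y-y_0)^j$, each summand is a separable function of $x$ and $y$, so $g_k$ automatically has rank at most $K$. Hence the work lies entirely in choosing $y_0$ and estimating the Taylor remainder.

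I would set $y_0 = \beta^{k-1} N$ (the right endpoint of the interval) and define
\[
g_k(x,y) = \sum_{j=0}^{K-1} \frac{(y-y_0)^j}{j!} \left.\frac{\partial^j}{\partial y^j} \Lambda\!\left(\tfrac{x+y}{2}\right)\right|_{y=y_0}.
\]
By the Lagrange form of Taylor's theorem, for every $(x,y)\in[0,N]\times[\beta^k N,\beta^{k-1} N]$ there exists $\xi$ between $y$ and $y_0$ such that
\[
\left|\Lambda\!\left(\tfrac{x+y}{2}\right) - g_k(x,y)\right| = \frac{|y-y_0|^K}{K!} \left|\frac{\partial^K}{\partial y^K} \Lambda\!\left(\tfrac{x+\xi}{2}\right)\right|.
\]
Since $\xi\geq \beta^k N>1$, Lemma~\ref{lem:BoundedDerivative} applies at $\xi$ and bounds the $K$th derivative by $K!\,e/\xi^K$, and we have the simple geometric bounds $\xi \geq \beta^k N$ and $|y-y_0|\leq \beta^{k-1}N - \beta^k N = \beta^{k-1}(1-\beta)N$.

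Substituting these bounds gives
\[
\left|\Lambda\!\left(\tfrac{x+y}{2}\right) - g_k(x,y)\right| \leq e\cdot\frac{(\beta^{k-1}(1-\beta)N)^K}{(\beta^k N)^K} = e\left(\frac{1-\beta}{\beta}\right)^K.
\]
Because $\beta>1/2$ we have $\beta/(1-\beta)>1$, so the choice $K=\lceil \log(e/\epsilon)/\log(\beta/(1-\beta))\rceil$ forces $e((1-\beta)/\beta)^K \leq \epsilon$, completing the bound. The only real obstacle is organizing the remainder estimate so that the extra factor $\beta^{-K}$ from the denominator $\xi^K$ combines cleanly with the numerator $(\beta^{k-1}(1-\beta)N)^K$; this is what dictates the specific choice $y_0 = \beta^{k-1}N$ (or equivalently $y_0 = \beta^k N$, which yields the same ratio). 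The restriction $\beta > 1/2$ is exactly what makes the geometric factor $(1-\beta)/\beta$ strictly less than one and hence the logarithm in $K$ positive.
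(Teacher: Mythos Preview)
Your proof is correct and is essentially identical to the paper's: both construct $g_k$ as the degree-$(K-1)$ Taylor polynomial of $\Lambda((x+y)/2)$ in $y$, apply Lemma~\ref{lem:BoundedDerivative} to bound the remainder, and arrive at the same estimate $e\bigl((1-\beta)/\beta\bigr)^K$. The only cosmetic difference is the choice of base point---the paper expands about the left endpoint $y_0=\beta^k N$ while you expand about the right endpoint $y_0=\beta^{k-1}N$---and, as you yourself observe, this makes no difference to the final ratio.
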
 
\begin{proof} 
By Taylor's theorem expanding about $y = \beta^{k}N$, we have
\begin{equation}
\Lambda\left(\frac{x+y}{2}\right)  =  \underbrace{\sum_{j=0}^{K-1} \frac{(y-\beta^{k}N)^j}{j!} \left( \frac{\partial^n}{\partial y^n}\Lambda\left(\tfrac{x+y}{2}\right) \bigg|_{y=\beta^kN}\right)}_{=g_k(x,y)} + R_{K-1}(x,y),\qquad x\in[0,N].
\label{eq:Taylor}
\end{equation} 
Moreover, for $(x,y) \in[0,N]\times [\beta^kN,\beta^{k-1}N]$ we have
\[
\left| R_{K-1}(x,y) \right| \leq \frac{\left(\beta^{k-1}N-\beta^kN\right)^K}{K!} \frac{K!\, e}{\left(\beta^kN\right)^K}\leq e\left(\frac{1-\beta}{\beta}\right)^K,
\]
where we used the fact that $y\in [\beta^kN,\beta^{k-1}N]$ and Lemma~\ref{lem:BoundedDerivative}.  Since $K = \lceil \log(e/\epsilon)/\log(\beta/(1-\beta))\rceil$,  the Taylor error satisfies $\left| R_{K-1}(x,y) \right| \leq \epsilon$.  The result follows since $g_k(x,y)$ in~\eqref{eq:Taylor} 
is a function of rank at most $K$, i.e., it can be written as a sum of $K$ terms of the form $c(x)r(y)$. 
\end{proof} 

Lemma~\ref{lem:LowNumericalRankSections} shows that on each domain of the form $[0,N]\times [\beta^kN,\beta^{k-1}N]$ the function 
$\Lambda\left(\tfrac{x+y}{2}\right)$ can be approximated, up to an error of $\epsilon$, by a rank $\mathcal{O}(\log(1/\epsilon))$ function. 
Importantly, the rank of the function $g_k(x,y)$ on $[0,N]\times [\beta^kN,\beta^{k-1}N]$ in Lemma~\ref{lem:LowNumericalRankSections} does 
not depend on $k$. This means that $\Lambda\left(\tfrac{x+y}{2}\right)$ has approximately the same numerical rank on 
each subdomain in Figure~\ref{fig:GeometricSubdivision}.  Since $[0,N]\times [1,N]$ was partitioned into a 
total of $\mathcal{O}(\log N)$ domains, the function $\Lambda\left(\tfrac{x+y}{2}\right)$ on $[0,N]\times [1,N]$ can be approximated
up to an error of $\epsilon$, by a rank $\mathcal{O}(\log N\log(1/\epsilon))$ function. 

\begin{theorem}
 There is a rank $K$ function $h$ defined on $[0,N]\times [1,N]$ such that
 \[
 \left|\Lambda\left(\frac{x+y}{2}\right) - h(x,y)\right|\leq \epsilon, \qquad (x,y)\in [0,N]\times [1,N],
 \]
 where $K = \lceil \log_{\tfrac{1}{\beta}}(N)\rceil \cdot \lceil \log(e/\epsilon)/\log(\beta/(1-\beta))\rceil = \mathcal{O}(\log N\log(1/\epsilon))$.
 \label{thm:LowNumericalRank} 
\end{theorem}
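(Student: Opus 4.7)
My plan is to assemble $h$ from the local approximations produced by Lemma~\ref{lem:LowNumericalRankSections}, by partitioning the $y$-interval $[1,N]$ into $\mathcal{O}(\log N)$ geometric strips. Concretely, fix $\tfrac12<\beta<1$, let $k_{\max}=\lceil \log_{1/\beta}(N)\rceil$, and set
\[
I_k = [\beta^k N,\,\beta^{k-1}N],\qquad k=1,\dots,k_{\max}.
\]
These strips cover $[\beta^{k_{\max}}N,N]$; only the tail $[1,\beta^{k_{\max}}N]$ (if nonempty) needs to be handled separately. On each strip $[0,N]\times I_k$ with $\beta^{k}N\geq 1$, Lemma~\ref{lem:LowNumericalRankSections} supplies a rank-$K_0$ function $g_k(x,y)$ with $K_0=\lceil \log(e/\epsilon)/\log(\beta/(1-\beta))\rceil$ that approximates $\Lambda((x+y)/2)$ uniformly to accuracy $\epsilon$. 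I would then define $h(x,y):=g_k(x,y)$ for $y\in I_k$.

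The rank count is the next step. Each $g_k$ has the form $\sum_{j=1}^{K_0} c_{k,j}(x)r_{k,j}(y)$, so the piecewise function $h(x,y)=\sum_{k}\sum_{j=1}^{K_0}c_{k,j}(x)\bigl(r_{k,j}(y)\chi_{I_k}(y)\bigr)$ has rank at most $k_{\max}\cdot K_0$; since characteristic functions of disjoint intervals can be absorbed into the univariate factors in $y$, this gives the claimed $\mathcal{O}(\log N\log(1/\epsilon))$ rank bound. The sup-norm error is inherited trivially from the local estimates: for any $(x,y)\in [0,N]\times[1,N]$, the point $y$ lies in some $I_k$ and $|\Lambda((x+y)/2)-h(x,y)|=|\Lambda((x+y)/2)-g_k(x,y)|\leq\epsilon$.

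The main obstacle is the residual strip near $y=1$, since Lemma~\ref{lem:LowNumericalRankSections} requires $\beta^kN>1$ and the geometric chain of intervals $I_k$ need not terminate cleanly at $y=1$. My plan is to treat this tail via a direct Taylor expansion about $y_0=1$: by Lemma~\ref{lem:BoundedDerivative} the $K$th $y$-derivative of $\Lambda((x+y)/2)$ is bounded by $K!\,e/y^{K}\leq K!\,e$ on $[1,1/\beta]$, so the Taylor remainder on the interval $[1,\beta^{k_{\max}}N]\subseteq[1,1/\beta]$ satisfies
\[
|R_{K-1}(x,y)|\leq e\,(y-1)^{K}\leq e\bigl((1-\beta)/\beta\bigr)^{K},
\]
which matches the error bound in Lemma~\ref{lem:LowNumericalRankSections} and therefore requires the same $K_0$ terms. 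Adding this extra strip changes the total number of strips by at most one, which is absorbed into the $\mathcal{O}(\log N)$ factor, so the bound $K=\lceil\log_{1/\beta}(N)\rceil\cdot\lceil \log(e/\epsilon)/\log(\beta/(1-\beta))\rceil$ of the theorem stands (up to harmless off-by-one adjustments that can be hidden in the constants).
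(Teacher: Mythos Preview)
Your proposal is correct and follows essentially the same approach as the paper: geometrically partition $[1,N]$ in the $y$-variable into $\lceil\log_{1/\beta}(N)\rceil$ strips, invoke Lemma~\ref{lem:LowNumericalRankSections} on each, and concatenate the local rank-$K_0$ approximants so that the global rank is bounded by the sum. The paper's proof is essentially a two-sentence version of your argument; you are in fact more careful than the paper in explicitly handling the bottom strip near $y=1$ (where the hypothesis $\beta^kN>1$ of Lemma~\ref{lem:LowNumericalRankSections} may fail), supplying a direct Taylor bound there---the paper simply glosses over this edge case. One minor bookkeeping point: with your choice $k_{\max}=\lceil\log_{1/\beta}(N)\rceil$ the ``tail'' $[1,\beta^{k_{\max}}N]$ is actually empty, and the real issue is that the lowest strip $I_{k_{\max}}$ itself may dip below $y=1$; but your Taylor argument on $[1,1/\beta]$ covers that intersection just as well, so the conclusion is unaffected.
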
 
\begin{proof}
Let $p = \lceil \log_{\tfrac{1}{\beta}}(N)\rceil$. 
Concatenate together the $\lceil \log_{\tfrac{1}{\beta}}(N)\rceil$ functions $g_1, \ldots, g_p$ on the domains 
$[0,N]\times [1,\beta^{k} N],  [0,N]\times [\beta^k N,\beta^{k-1} N], \ldots, [0,N]\times [\beta N,N]$, respectively,
from Lemma~\ref{lem:LowNumericalRankSections}.  The resulting function, say $h$, has a rank at most the sum 
of the ranks of $g_1,\ldots,g_p$. 
\end{proof} 
 
Theorem~\ref{thm:LowNumericalRank} is sufficient for the purposes of this paper. One can sample the constructed function 
in $h(x,y)$ in Theorem~\ref{thm:LowNumericalRank} at the tensor grid $\{0,\ldots,N\}\times \{1,\ldots,N\}$ to obtain a 
rank $\mathcal{O}( \log N \log(1/\epsilon ) )$ matrix $\hat{H}$ that approximates $\tilde{H}$ in~\eqref{eq:SplitFirstRow}, 
up to an accuracy of $\epsilon$. Therefore, $H$ can also be approximated by a rank $K = \mathcal{O}( \log N \log(1/\epsilon ) )$
matrix. In practice, we use the pivoted Cholesky algorithm (see Section~\ref{sec:PivotedCholesky}) to construct a low rank 
approximant for $H$ in $\mathcal{O}(K^2N)$ operations.  

Using the formula in~\eqref{eq:Cdecomposition} we can then calculate $\underline{c}^{cheb} = M\underline{c}^{leg}$ via a sum of $\mathcal{O}(K)$ 
diagonally-scaled Toeplitz matrix-vector products. Hence, we have described an $\mathcal{O}(N(\log N)^2)$ algorithm for 
computing $\underline{c}^{cheb} = M\underline{c}^{leg}$ (see Figure~\ref{fig:Summary}). 


\section{Numerical results for Legendre-to-Chebyshev conversion}\label{sec:NumericalResults} 
All the numerical results were performed on a 3.1 GHz Intel Core i7 Macbook Pro 2015 with MATLAB 2015b or Julia v0.4.5~\cite{Bezanson_14_01}. 
In these numerical experiments we employ three different algorithms for computing the matrix-vector product 
$\underline{c}^{cheb} = M \underline{c}^{leg}$: 
\begin{itemize} 
\item {\bf Direct:} The direct algorithm computes $\underline{c}^{cheb}$ by first 
constructing the $(N+1)\times (N+1)$ matrix $M$ one row at a time and then calculating the dot product with $\underline{c}^{leg}$. 
Therefore, the vector $\underline{c}^{cheb}$ is computed entry-by-entry, costing a total of $\mathcal{O}(N^2)$ operations and 
requiring $\mathcal{O}(N)$ storage. 
\item {\bf ASY:} The algorithm that for shorthand we call ASY here is described in~\cite{Hale_14_01}. It computes the matrix-vector product
$\underline{c}^{cheb} = M \underline{c}^{leg}$ in $\mathcal{O}(N(\log N)^2/\log\log N)$ operations by using a trigonometric 
asymptotic formula for Legendre polynomials. Before this paper, it was the algorithm employed in the
{\tt leg2cheb} command in Chebfun~\cite{Chebfun}.    
\item {\bf New:} The algorithm described in this paper. It is summarized in Figure~\ref{fig:Summary}, costing 
$\mathcal{O}(N(\log N)^2)$ operations. 
\end{itemize} 
Other algorithms for computing the matrix-vector product $\underline{c}^{cheb} = M\underline{c}^{leg}$ in fewer than $\mathcal{O}(N^2)$ 
operations are given in the pioneering paper by Alpert and Rokhlin~\cite{Alpert_91_01} as well as~\cite{Keiner_11_01,Orszag_86_01}. 

As a first test we take arbitrarily distributed vectors $\underline{c}^{leg}$ with various rates of decay and consider
the accuracy of our algorithm described in this paper against an extended precision computation (performed 
using the {\tt BigFloat} type in Julia).\footnote{In particular, the vector corresponding to, say, $N = 100$ with $\mathcal{O}(n^{-1})$
decay can be reproduced exactly by the Julia code {\tt srand(0); c = randn(101)./(1:101)}. The fixed random seed is employed for the sake of reproducibility.} Figure~\ref{fig:Leg2chebResults} (left) shows 
the absolute maximum errors in the computed vectors $\underline{c}^{cheb}$ for $10\leq N\leq 10^4$.  In~\cite[Fig.~5.1]{Hale_14_01}
analogous errors were calculated for the direct and ASY algorithms.  In Table~\ref{tab:ErrorGrowth} we summarise the observed error growth in the
absolute maximum for the three different algorithms. In many applications the Legendre expansion in~\eqref{eq:pN} represents a 
polynomial interpolant of a smooth function. In this setting, if the function is H\"{o}lder continuous with parameter greater than $0$, then 
we observe that our new algorithm has essentially no error growth with $N$. 

\begin{figure}
\begin{minipage}{.49\textwidth} 
\begin{overpic}[width=\textwidth]{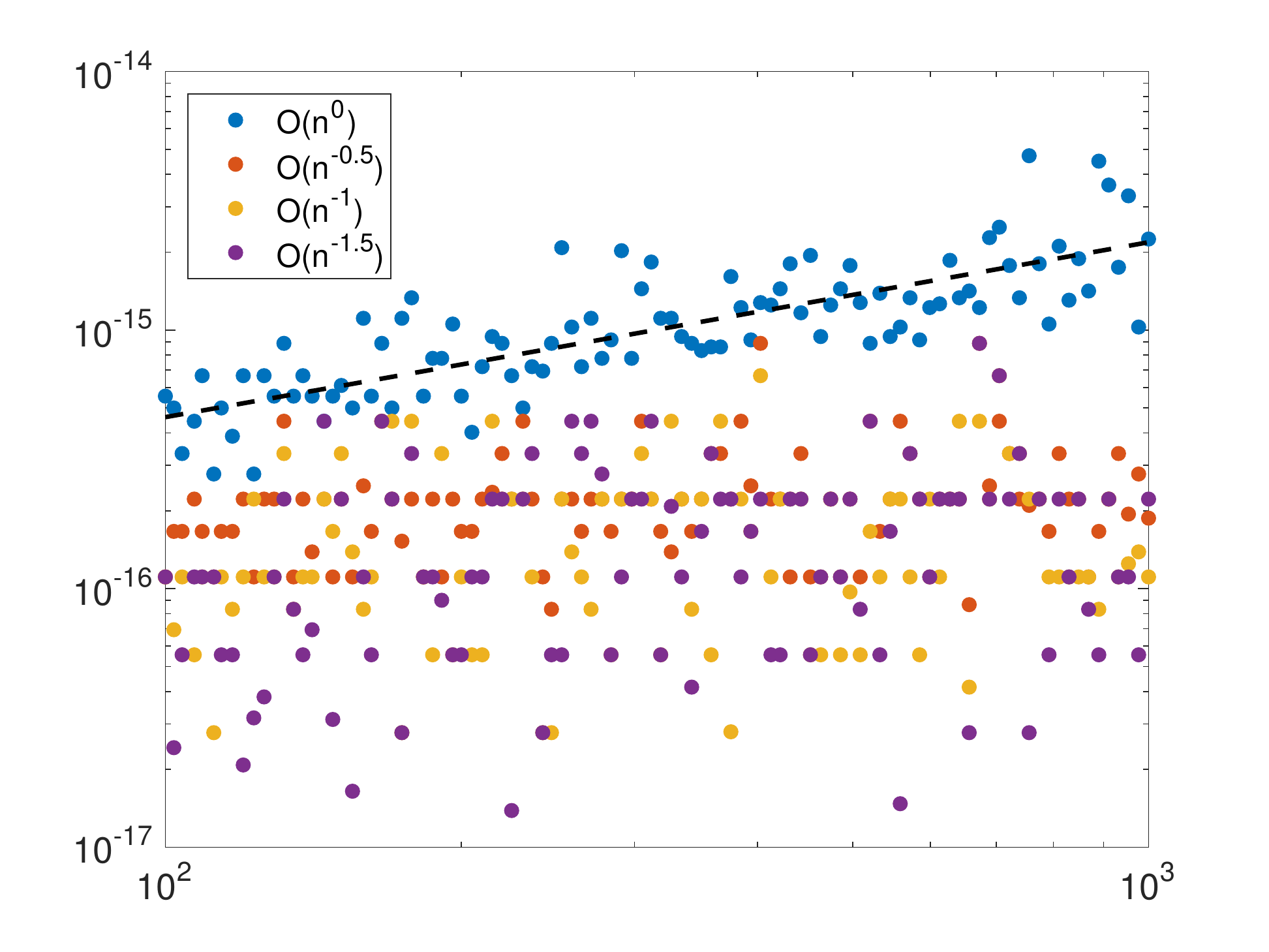}
\put(50,0) {$N$}
\put(0,22) {\rotatebox{90}{Absolute error}}
\put(35,52) {\rotatebox{10}{$\mathcal{O}(N^{0.5}\log N)$}}
\end{overpic} 
\end{minipage}
\begin{minipage}{.49\textwidth} 
\begin{overpic}[width=\textwidth]{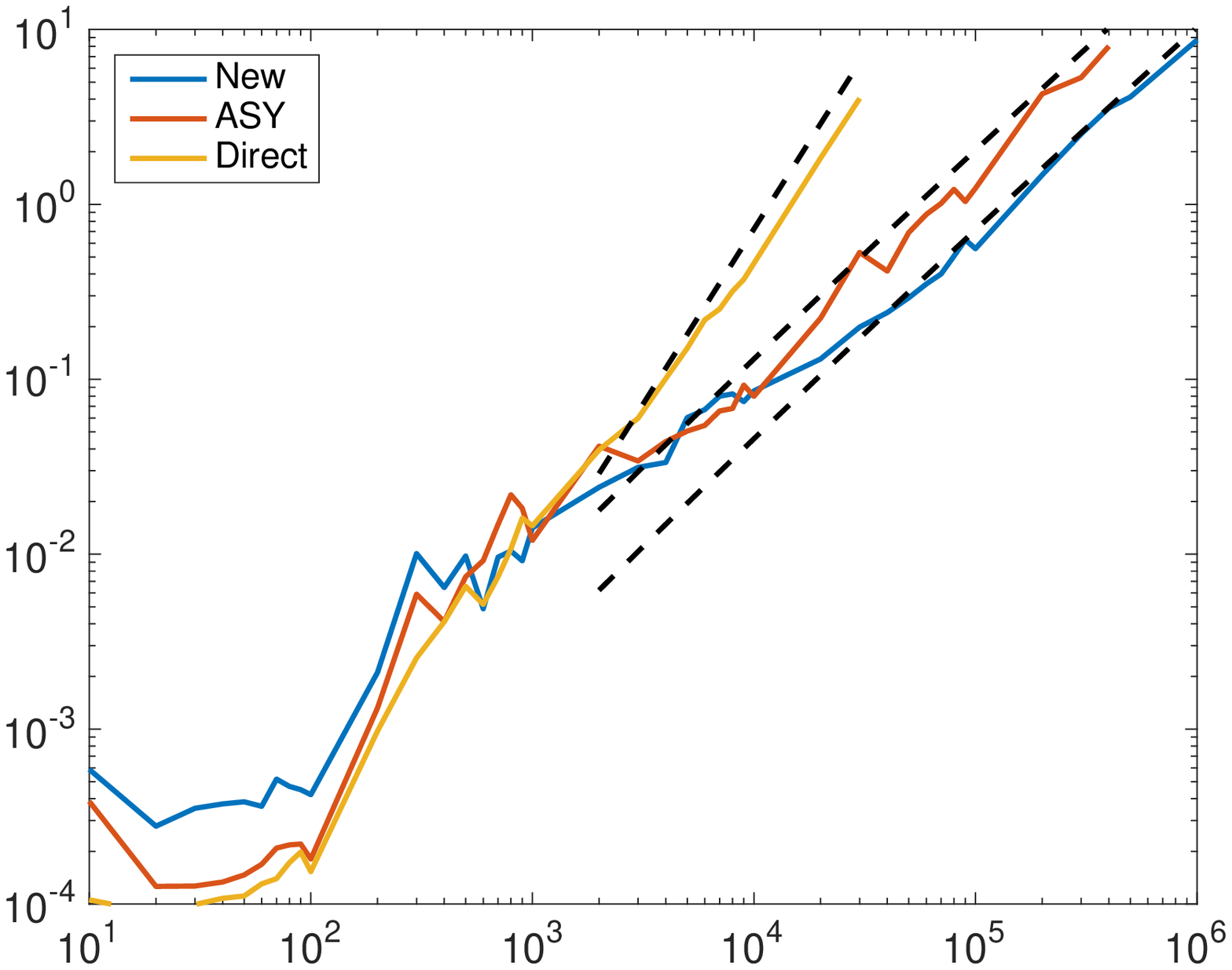}
\put(45,45) {\rotatebox{65}{$\mathcal{O}(N^2)$}} 
\put(60,33) {\rotatebox{45}{$\mathcal{O}(N(\log N)^2)$}} 
\put(0,10) {\rotatebox{90}{Execution time (seconds)}}
\put(50,0) {$N$}
\end{overpic} 
\end{minipage} 
\caption{Left: Errors in computing $\underline{c}^{cheb} = M\underline{c}^{leg}$ with various decay rates in $\underline{c}^{leg}$. A vector is created 
using {\tt randn(N)} in Julia and then the vector is scaled so the $n$th entry is $\mathcal{O}(n^0)$, $\mathcal{O}(n^{-0.5})$, $\mathcal{O}(n^{-1})$, $\mathcal{O}(n^{-1.5})$. The dashed line shows the observed error growth in the case when $\underline{c}^{leg}$ does not decay. 
Right: Execution times between the direct (yellow), ASY (red), and the new algorithm described in this paper (blue). The new algorithm 
presented in this paper is about $2$ or $3$ times faster than ASY, while being more accurate and only requiring a handful of lines of code to implement.}
\label{fig:Leg2chebResults} 
\end{figure} 

\begin{table}
\centering
\begin{tabular}{ccccc}
\multicolumn{5}{c}{Decay of the vector $\underline{c}^{leg}$}  \\[5pt]
   & $\mathcal{O}(1)$ &  $\mathcal{O}(n^{-0.5})$ & $\mathcal{O}(n^{-1})$ & $\mathcal{O}(n^{-1.5})$\\[5pt] 
\hline
\rule{0pt}{1.5\normalbaselineskip}Direct & $\mathcal{O}(N^{1.5}/\log N)$ &  $\mathcal{O}(N/\log N)$ &  $\mathcal{O}(N^{0.5}/\log N)$ &  $\mathcal{O}(\log N)$ \\[5pt]
ASY  & $\mathcal{O}(N)$ & $\mathcal{O}(N^{0.5})$ & $\mathcal{O}(\log N)$ & $\mathcal{O}(1)$\\[5pt]
New  & $\mathcal{O}(N^{0.5}\log N)$ & $\mathcal{O}(1)$ & $\mathcal{O}(1)$ & $\mathcal{O}(1)$\\[5pt]
\hline
\end{tabular} 
\caption{Observed absolute maximum error growth in computing $\smash{\underline{c}^{cheb} = M\underline{c}^{leg}}$ when $\underline{c}^{leg}$ has different decay rates. 
The error growth of the ASY algorithm  given in~\cite[Fig.~5.1]{Hale_14_01} were computed using the implementation of {\tt leg2cheb} found in 
version 5.3 of Chebfun~\cite{Chebfun}. These are worse than those presented here because the ASY algorithm now uses Reinsch's modification of the Legendre recurrence as recommended in~\cite{slevinsky2016use}. We observe that if the Legendre expansion in~\eqref{eq:pN} comes from a polynomial interpolant of a H\"older continuous function with parameter $>0$, then our new algorithm has essentially no error growth.} 
\label{tab:ErrorGrowth}
\end{table} 

For a second test, in Figure~\ref{fig:Leg2chebResults} (right) we compare the execution times for the three algorithms.  
Despite the direct algorithm requiring $\mathcal{O}(N^2)$ operations, 
it is computationally more efficient when $N\leq 512$. The new algorithm presented here is $2$ or $3$ times faster than the ASY algorithm for large 
$N$, while being conceptually simpler and more accurate for nondecaying vectors $\underline{c}^{leg}$. Based on these numerical experiments, the {\tt leg2cheb} command in Chebfun~\cite{Chebfun} and the 
{\tt leg2cheb} in FastTransforms.jl~\cite{FastTransforms} use the direct algorithm when $N\leq 512$ and the new algorithm otherwise.

\section{Other polynomial basis conversions}\label{sec:OtherConversions}
So far the paper has focused on the task of converting Legendre coefficients for $p_N$ in~\eqref{eq:pN} to Chebyshev 
coefficients.  In this section we consider other standard polynomial basis conversions, showing how our $\mathcal{O}(N(\log N)^2)$ 
algorithm summarized in Figure~\ref{fig:Summary} remains applicable. 

\subsection{Chebyshev-to-Legendre conversion}\label{sec:cheb2leg}
To compute the Legendre coefficients of a given polynomial $p_N$ in fewer than $\mathcal{O}(N^2)$ operations, one can first compute the Chebyshev coefficients using the discrete cosine transform (DCT) of its values at Chebyshev points in $\mathcal{O}(N\log N)$ operations~\cite{Gentleman_72_01}, then use a fast Chebyshev-to-Legendre conversion. Alternatively, a direct transform taking values of the polynomial in the complex plane to Legendre coefficients is given in~\cite{Iserles_11_01} and 
a fast transform for converting values of the polynomial at Legendre points to Legendre coefficients is given in~\cite{Hale_15_01}. 

The inverse of the Legendre-to-Chebyshev matrix $M^{-1}$, denoted by $L$ in~\cite{Alpert_91_01}, 
converts Chebyshev coefficients to Legendre coefficients, i.e., $\underline{c}^{leg} = L\underline{c}^{cheb}$ in~\eqref{eq:pN}.
Explicit formulas for the entries of $L=M^{-1}$ are given as follows~\cite[(2.19)]{Alpert_91_01}:
\begin{equation}
L_{jk} = \begin{cases} 1, & j= k = 0, \\ \frac{\sqrt{\pi}}{2\Lambda(j)}, & 0< j = k \leq N,\\ -k(j+\tfrac{1}{2})\left(\frac{\Lambda\left(\frac{k-j-2}{2}\right)}{k-j}\right)\left(\frac{\Lambda\left(\frac{j+k-1}{2}\right)}{j+k+1}\right), & 0 \leq j < k \leq N, \text{ } k-j \text{ even}, \end{cases}
\label{eq:FormulaL}
\end{equation} 
where $\Lambda(z) = \Gamma(z+1/2)/\Gamma(z+1)$ and $\Gamma(z)$ is the gamma function. The fact that $L$ can 
be written as $L = D_1(T\circ H)D_2$ is almost immediate from~\eqref{eq:FormulaL}. In particular, we have 
$D_1 = \mathrm{diag}(\frac12,\frac32,\frac52,\ldots,\tfrac{2N+1}{2})$, $D_2 = -\frac14\mathrm{diag}(\frac{4}{\sqrt{\pi}},1,2,3,\ldots,N)$, and
\[
\begin{aligned}
T_{jk} &= \begin{cases} \Gamma\!\left(\frac{k-j-1}{2}\right)/\Gamma\!\left(\frac{k-j+2}{2}\right), & 0\leq j\leq k\leq N, \text{ }k-j \text{ even},\\ 0, & \text{otherwise},\end{cases} \\
H_{jk} &=  \begin{cases} \Gamma\!\left(\frac{j+k}{2}\right)/\Gamma\!\left(\frac{j+k+3}{2}\right), & 0\leq j, k\leq N, \text{ } j+k > 0,\\ 1, & j=k=0.\end{cases}
\end{aligned}
\]

Unfortunately, the matrix $H$ is not positive semidefinite. This turns out not to matter, because the submatrix $\tilde{H} = \left(H_{jk}\right)_{1\leq j,k\leq N}$ is positive semi-definite by the following identity (see Lemma~\ref{lem:hamburger}),
\[
H_{jk} = \frac{4}{\sqrt{\pi}}\int_0^1 x^{j+k-1}(1-x^2)^{1/2} dx, \quad 1\leq j,k\leq N,
\]
Moreover, a similar analysis to that in Section~\ref{sec:HankelLowRank} can be used to show that $\tilde{H}$ can be approximated, up to an error of $\epsilon>0$, by a rank $\mathcal{O}(\log N \log(1/\epsilon))$ matrix. Hence, when computing $\underline{c}^{leg} = L\underline{c}^{cheb}$, we compute the first entry of $\underline{c}^{leg}$ directly, and use the algorithm described in Figure \ref{fig:Summary} on $\left(L_{jk}\right)_{1\leq j,k\leq N}$ to compute the remaining entries. The resulting algorithm is implemented in the {\tt cheb2leg} commands in Chebfun~\cite{Chebfun} and FastTransforms.jl~\cite{FastTransforms}.

In Figure~\ref{fig:Cheb2legResults} we repeat the same experiments as for the Legendre-to-Chebyshev conversion in Section~\ref{sec:NumericalResults}. In Figure~\ref{fig:Cheb2legResults}
(left) we compute the maximum error of the resulting vector $\underline{c}^{leg}$ for different decay rates in $\underline{c}^{cheb}$. 
Due to the $\mathcal{O}(N)$ growth in the entries of $D_1$, we find that the conversion requires a decay faster than $\mathcal{O}(n^{-1})$ 
in $\underline{c}^{cheb}$ to have essentially no error growth. This holds when $p_N$ in~\eqref{eq:pN} is a 
Chebyshev interpolant of a H\"{o}lder continuous function with H\"{o}lder parameter $>1/2$. The observed error growth 
is less than that observed for the Chebyshev-to-Legendre algorithm in~\cite[Fig.~5.2]{Hale_14_01}. 

In Figure~\ref{fig:Cheb2legResults} (right) we show the execution times of the three algorithms: (1) Direct, an algorithm that costs 
$\mathcal{O}(N^2)$ operations and requires $\mathcal{O}(N)$ memory based on generating the whole matrix $L$ one row at a time, (2) 
ASY, an $\mathcal{O}(N(\log N)^2/\log\log N)$ complexity algorithm described in~\cite[Sec.~4]{Hale_14_01}, and (3) New, the algorithm 
described in this paper (see Figure~\ref{fig:Summary}).  Our new algorithm is faster than direct when $N\geq 1,\!000$ and is about $2$ or 
$3$ times faster than the ASY algorithm for large $N$.   

\begin{figure}
\begin{minipage}{.49\textwidth} 
\begin{overpic}[width=\textwidth]{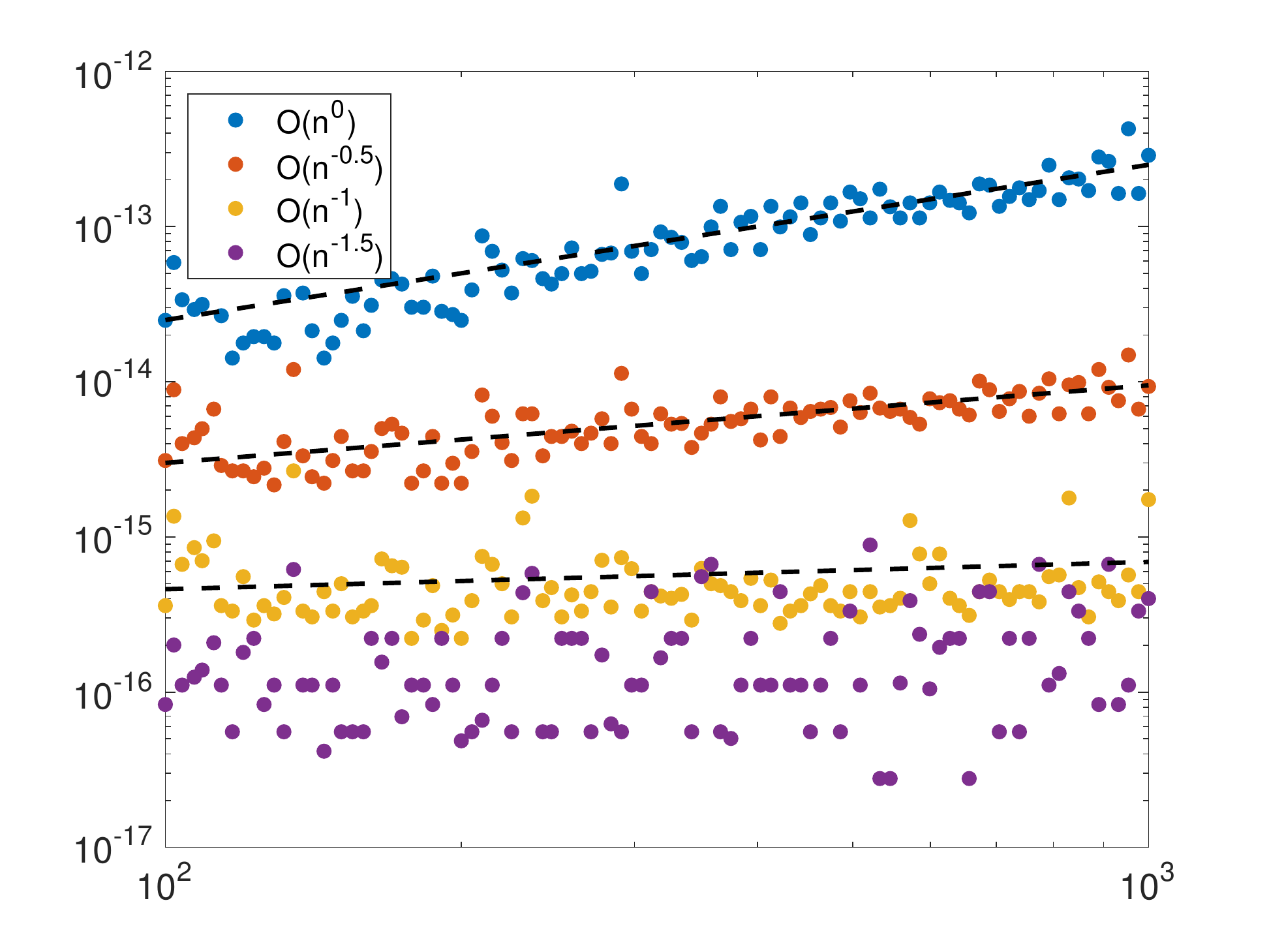}
\put(50,0) {$N$}
\put(0,22) {\rotatebox{90}{Absolute error}}
\put(45,57) {\rotatebox{14}{$\mathcal{O}(N)$}}
\put(45,43) {\rotatebox{8}{$\mathcal{O}(\sqrt{N})$}}
\put(45,31) {\rotatebox{3}{$\mathcal{O}(\log N)$}}
\end{overpic} 
\end{minipage}
\begin{minipage}{.49\textwidth} 
\begin{overpic}[width=\textwidth]{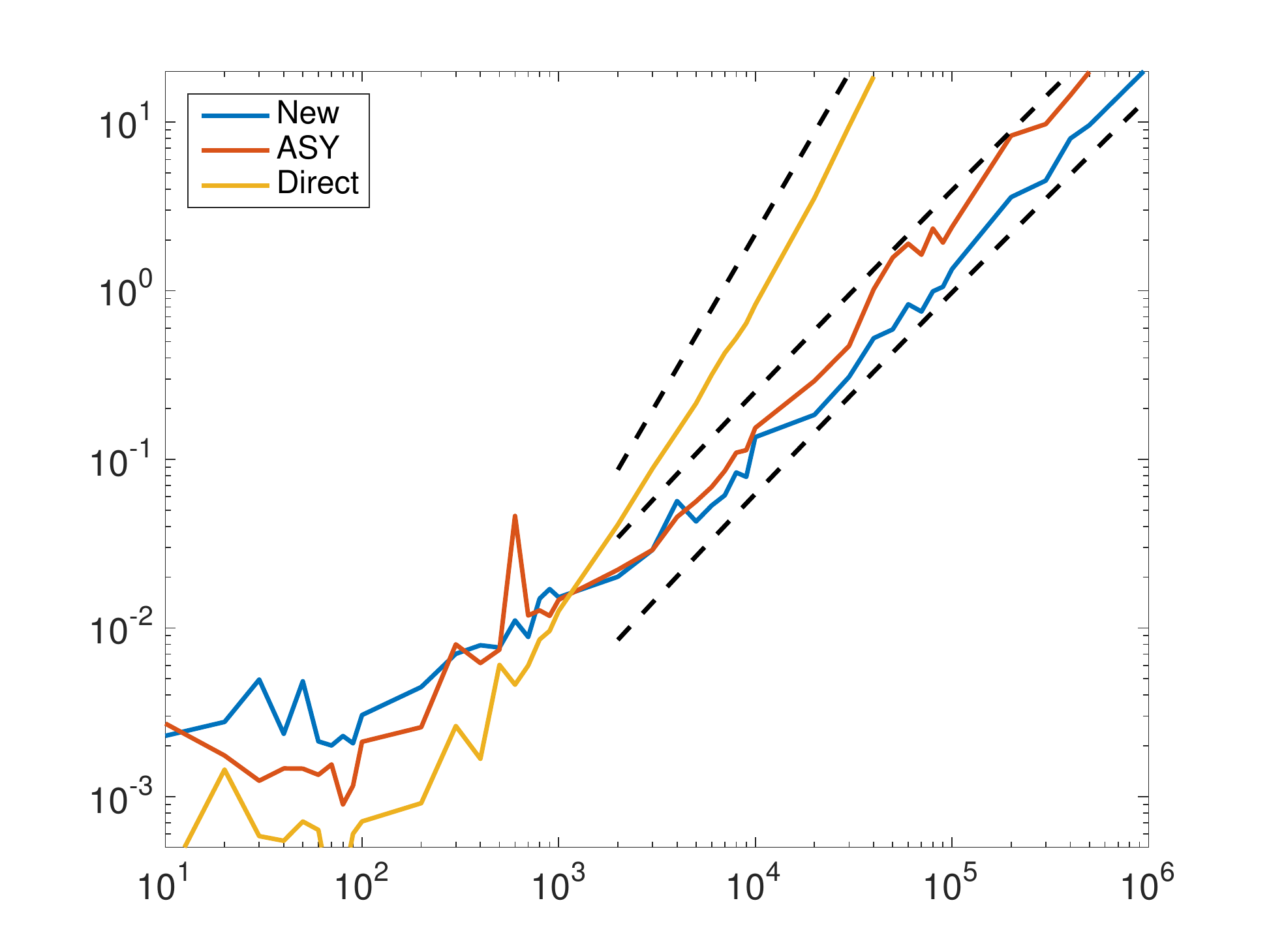}
\put(45,45) {\rotatebox{65}{$\mathcal{O}(N^2)$}} 
\put(60,30) {\rotatebox{45}{$\mathcal{O}(N(\log N)^2)$}} 
\put(0,10) {\rotatebox{90}{Execution time (seconds)}}
\put(50,0) {$N$}
\end{overpic} 
\end{minipage} 
\caption{Left: Errors in computing $\underline{c}^{leg} = L\underline{c}^{cheb}$ with various decay rates in $\underline{c}^{cheb}$. Arbitrary vectors are 
generated in precisely the same manner as in Section~\ref{sec:NumericalResults}. The dashed lines show the observed error growth for the various decay rates. 
Right: Execution times between the direct (yellow), ASY (red), and the new algorithm described in this paper (blue).}
\label{fig:Cheb2legResults}
\end{figure} 

\subsection{Ultraspherical-to-ultraspherical conversion}\label{sec:UltraphericalConversions} 
The ultraspherical polynomial of degree $k$ and parameter $\lambda>0$ is denoted by $C_k^{(\lambda)}$~\cite[Tab.~18.3.1]{NISTHandbook}. 
If $\lambda>0$, then $\{C_0^{(\lambda)},C_1^{(\lambda)}\ldots,\}$ is a family of orthogonal polynomials 
that are orthogonal with respect to the weight function $(1-x^2)^{\lambda-1/2}$ on $[-1,1]$.  Ultraspherical 
polynomials can be seen as a generalization of Legendre polynomials since $P_k(x) = C^{(1/2)}_k(x)$~\cite[(18.7.9)]{NISTHandbook}. 
For papers on computing and converting ultraspherical expansions see, for example,~\cite{Cantero_12_01,Keiner_09_01}.   

Let $\lambda_1>0$ and $\lambda_2>0$. The degree $N$ polynomial $p_N$ in~\eqref{eq:pN} can be expanded in the two 
ultraspherical polynomial bases associated to $\lambda_1$ and $\lambda_2$, i.e., 
\[
p_n(x) = \sum_{k=0}^N c_k^{(\lambda_1)} C_k^{(\lambda_1)}(x)  = \sum_{k=0}^N c_k^{(\lambda_2)} C_k^{(\lambda_2)}(x).
\]
There is an upper-triangular conversion matrix, $A$, such that $\underline{c}^{(\lambda_2)} = A\underline{c}^{(\lambda_1)}$. We desire a fast 
algorithm for computing the matrix-vector product  $\underline{c}^{(\lambda_2)} = A\underline{c}^{(\lambda_1)}$. 
There are several cases to consider: (1) $\lambda_1-\lambda_2$ is an integer, (2) $|\lambda_1-\lambda_2|<1$, and (3) $|\lambda_1-\lambda_2|>1$, but 
the difference is a noninteger.  

\subsubsection*{Case 1: $\mathbf{\lambda_1-\lambda_2}$ is an integer}
 If $\lambda_1-\lambda_2$ is an integer, then an $\mathcal{O}(N)$ matrix-vector product is immediate from the recurrence relation~\cite[(18.9.7)]{NISTHandbook}. For example, if $\lambda_2-\lambda_1$ is a positive integer, then we can factor $A$ into the product of sparse 
matrices as follows: 
\begin{equation} 
A = P_NS_{\lambda_2-1} \cdots S_{\lambda_1+1}S_{\lambda_1}P_N^T, \qquad S_{\lambda} = \begin{pmatrix}1 & & -\frac{\lambda}{\lambda+2}\\ & \frac{\lambda}{\lambda+1} & & -\frac{\lambda}{\lambda+3}\\ & & \frac{\lambda}{\lambda+2} & & -\frac{\lambda}{\lambda+4}\\ & & & \ddots & & \ddots \end{pmatrix}. 
\label{eq:sparseFactorization}
\end{equation} 
where $P_N = \begin{bmatrix}I_{N+1} & O \end{bmatrix}$ is the $(N+1)\times \infty$ project matrix. 
The matrix-vector product $\underline{c}^{(\lambda_2)} = A\underline{c}^{(\lambda_1)}$ can be computed in $\mathcal{O}(N)$ operations by applying each truncated sparse factor in turn. 
Since $S_{\lambda}$ is banded and upper-triangular for all $\lambda>0$, the matrix-vector product 
$\underline{c}^{(\lambda_2)} = A\underline{c}^{(\lambda_1)}$ can also be computed in $\mathcal{O}(N)$ operations when $\lambda_2<\lambda_1$ by using backward substitution. 
The factorization in~\eqref{eq:sparseFactorization} is one key decomposition for the ultraspherical spectral method~\cite{Olver_13_01}. 

\subsubsection*{Case 2: $\mathbf{|\lambda_1-\lambda_2|<1}$}
Now assume that $|\lambda_1 - \lambda_2|<1$, then the conversion matrix, $A$, in $\underline{c}^{(\lambda_2)} = A\underline{c}^{(\lambda_1)}$ 
has the following explicit formula~\cite[(3.6)]{Keiner_09_01}:
\begin{equation}
A_{jk} = \begin{cases} \frac{\Gamma(\lambda_2)(j + \lambda_2)}{\Gamma(\lambda_1)\Gamma(\lambda_1-\lambda_2)} \left(\frac{\Gamma\left(\frac{k-j}{2} + \lambda_1-\lambda_2\right)}{\Gamma\left(\frac{k-j}{2}+1\right)}\right)\left(\frac{\Gamma\left(\frac{k+j}{2} + \lambda_1\right)}{\Gamma\left(\frac{k+j}{2}+\lambda_2+1\right)}\right), & 0\leq j\leq k, \text{ } k-j \text{ even},\\
0,&\text{otherwise}.\end{cases} 
\label{eq:UltraSconversion}
\end{equation} 
The formula in~\eqref{eq:UltraSconversion} reveals that the matrix $A$ can be written as a diagonally-scaled Toeplitz-dot-Hankel matrix.  More precisely, 
let $D_1 = {\rm diag}\left(\lambda_2,\lambda_2+1,\ldots,\lambda_2+N\right)$, $D_2 = I_{N+1}$ be the $(N+1)\times (N+1)$ identity matrix, and 
\[
\begin{aligned}
T_{jk} &= \begin{cases} \frac{\Gamma(\lambda_1-\lambda_2)\Gamma\left(\frac{k-j}{2} + \lambda_1-\lambda_2\right)}{\Gamma\left(\frac{k-j}{2}+1\right)},& 0\leq j\leq k\leq N, \text{ } k-j \text{ even},\\ 0,& \text{otherwise},\end{cases} \\
H_{jk} &= \frac{\Gamma(\lambda_2)\Gamma\left(\frac{k+j}{2} + \lambda_1\right)}{\Gamma(\lambda_1)\Gamma\left(\frac{k+j}{2}+\lambda_2+1\right)}, \quad 0\leq j,k\leq N.
\end{aligned}
\]
Since the entries of the Hankel part can be expressed as 
\[
H_{jk} = \frac{2\Gamma(\lambda_2)}{\Gamma(\lambda_1) \Gamma(\lambda_2-\lambda_1+1)}\int_0^1 x^{j+k} x^{2(\lambda_1 - 1/2)} (1-x^2)^{\lambda_2-\lambda_1} \mathrm{d} x, \quad 0\leq j,k\leq N,
\]
we know from Lemma~\ref{lem:hamburger} that $H$ is real, symmetric, and positive semidefinite (note that $\lambda_2 - \lambda_1> - 1$ and $2(\lambda_1-1/2) > -1$ so that the measure $\mu_H$ is locally finite). 
Moreover, a similar analysis to Section~\ref{sec:HankelLowRank} shows that $H$ can be approximated, up to an error of $\epsilon$, by a rank $K = \mathcal{O}(\log N\log(1/\epsilon))$ matrix.   Therefore, the algorithm that is summarized in Figure~\ref{fig:Summary} is applicable in this case and can be used to  compute $\underline{c}^{(\lambda_2)} = A\underline{c}^{(\lambda_1)}$ in $\mathcal{O}(N(\log N)^2)$ operations.  

\subsubsection*{Case 3: $\mathbf{|\lambda_1-\lambda_2|>1}$, but the difference is a noninteger}
If $|\lambda_1-\lambda_2|>1$, then we reduce the quantity $|\lambda_1-\lambda_2|$ by converting to either increase or reduce 
$\lambda_1$ by one (see Case $1$).  This is repeated if necessary until $|\lambda_1-\lambda_2|<1$ and the criterion for Case $2$ 
is satisfied. 

%
%

\bigskip 

Conversions such as ultraspherical-to-Chebyshev and Chebyshev-to-ultraspherical are  associated to upper-triangular 
matrices that can also be written as diagonally-scaled Toeplitz-dot-Hankel matrices. Fast $\mathcal{O}(N(\log N)^2)$ 
algorithms based on Figure~\ref{fig:Summary} for these conversions are also possible using the formulas in~\cite[Lem.~3.1]{Keiner_09_01}.  

\subsection{Jacobi-to-Jacobi conversion}\label{sec:JacobiConversions} 
The Jacobi polynomial of degree $k$ and parameter $(\alpha,\beta)$ is denoted by $P_k^{(\alpha,\beta)}$, where 
$\alpha,\beta>-1$~\cite[Tab.~18.3.1]{NISTHandbook}. The family of orthogonal polynomials $\{P_0^{(\alpha,\beta)},P_1^{(\alpha,\beta)},\ldots,\}$ 
is orthogonal with respect to the weight function $(1-x)^\alpha(1+x)^\beta$ on $[-1,1]$. The Jacobi polynomials can be seen as 
a generalization of Chebyshev, Legendre, and ultraspherical polynomials, see~\cite[Sec.~18.7]{NISTHandbook}. 

For $\alpha,\beta,\gamma,\delta>-1$, the degree $N$ polynomial $p_N(x)$ in~\eqref{eq:pN} can be expanded in Jacobi bases as follows: 
\begin{equation}
p_N(x) = \sum_{k=0}^N c_k^{(\alpha,\beta)}P_k^{(\alpha,\beta)}(x) = \sum_{k=0}^N c_k^{(\gamma,\delta)}P_k^{(\gamma,\delta)}(x),
\label{eq:JacobiExpansions} 
\end{equation} 
where there is an upper-triangular matrix, $A$, such that $\underline{c}^{(\gamma,\delta)} = A\underline{c}^{(\alpha,\beta)}$. 
By the reflection formula $P_k^{(\alpha,\beta)}(x) = (-1)^kP_k^{(\beta,\alpha)}(x)$~\cite[Tab.~18.6.1]{NISTHandbook}, it is sufficient to assume
that $\beta = \delta$ in~\eqref{eq:JacobiExpansions}.\footnote{If $\beta\neq \delta$, then a Jacobi $(\alpha,\beta)$ expansion can first be 
converted to a Jacobi $(\alpha,\delta)$ expansion and then a Jacobi $(\gamma,\delta)$ expansion.} 
A different fast algorithm using off-diagonal low rank structure of the conversion matrix, which has 
a fast quasilinear complexity online cost and an $\mathcal{O}(N^2)$ precomputation is given in~\cite{Shen_16_01}. Another fast algorithm 
for computing Jacobi expansions coefficients of analytic functions is described in~\cite{Wang_14_01}.

As in Section~\ref{sec:UltraphericalConversions} there are several cases to consider when computing 
$\underline{c}^{(\gamma,\delta)} = A\underline{c}^{(\alpha,\beta)}$, where $\beta = \delta$, in fewer than $\mathcal{O}(N^2)$ operations: 
(1) $\alpha-\gamma$ is an integer, 
(2) $|\alpha-\gamma|<1$ and $\alpha+\beta>-1$, 
(3) $|\alpha-\gamma|<1$ and $\alpha+\beta\leq -1$, and
(4) $|\alpha-\gamma|>1$, but the difference is a noninteger. 

\subsubsection*{Case 1: $\mathbf{\alpha-\gamma}$ is an integer}
First, suppose that $\alpha - \gamma$ is an integer. A fast matrix-vector product for  $\underline{c}^{(\gamma,\beta)} = A\underline{c}^{(\alpha,\beta)}$
is almost immediate via the recurrence relation~\cite[18.9.5]{NISTHandbook} and~\cite[Tab.~18.6.1]{NISTHandbook}. That is, assuming that $\gamma>\alpha$ we can factor $A$ as follows: 
\[
A = P_NS_{(\gamma-1,\beta)}\cdots S_{(\alpha,\beta)}P_N^T,\quad S_{(\alpha,\beta)} = \begin{pmatrix} 1 & -\frac{\alpha+1}{\alpha+\beta+3}\\[3pt] & \frac{\alpha+\beta+2}{\alpha+\beta+3}&-\frac{\alpha+2}{\alpha+\beta+5}\\[3pt] &&\frac{\alpha+\beta+3}{\alpha+\beta+5}&-\frac{\alpha+3}{\alpha+\beta+7}\\[3pt] &&&\ddots &\ddots \end{pmatrix}, 
\] 
where $P_N = \begin{bmatrix}I_{N+1} & 0 \end{bmatrix}$ is the $(N+1)\times \infty$ project matrix. The matrix-vector product 
$\underline{c}^{(\gamma,\beta)} = A\underline{c}^{(\alpha,\beta)}$ can be computed in $\mathcal{O}(N)$ operations by applying each truncated sparse factor in turn. If $\gamma<\alpha$, then since each $S_{(\alpha,\beta)}$ is bidiagonal and upper-triangular $\underline{c}^{(\gamma,\beta)} = A\underline{c}^{(\alpha,\beta)}$ can still be computed in $\mathcal{O}(N)$ operations by using backward substitution. 

\subsubsection*{Case 2: $\mathbf{|\alpha-\gamma|<1}$ and $\mathbf{\alpha+\beta}>-1$}
When $|\alpha-\gamma|<1$ there is no known \emph{sparse} factorization for the conversion matrix like in Case 1. However, the 
following explicit formula for its entries is known~\cite{Keiner_11_01,Maroni_08_01}: 
\begin{multline*}
 A_{jk} = \frac{(2j+\gamma +\beta + 1)}{\Gamma(\alpha-\gamma)}\frac{\Gamma(k+\beta+1)}{\Gamma(k+\alpha+\beta+1)}\frac{\Gamma(j + \gamma+\beta+1)}{\Gamma(j+\beta+1)} \\ 
 \times\frac{\Gamma(k-j+\alpha-\gamma)}{\Gamma(k-j+1)} \frac{\Gamma(k+j + \alpha+\beta +1)}{\Gamma(k+j+\gamma+\beta+2)},
\end{multline*}
where $0\leq j\leq k\leq N$. The entries of $A$ are zero otherwise.  A careful inspection of this formula 
reveals that it can also be expressed as a diagonally-scaled Toeplitz-dot-Hankel matrix.  

Moreover, the entries of the Hankel matrix can be expressed as follows: 
\begin{equation}
 H_{jk} = \frac{1}{\Gamma(\gamma-\alpha + 1)} \int_0^1 x^{j+k} x^{\alpha+\beta} (1-x)^{\gamma-\alpha} \mathrm{d}x,\quad 0\leq j,k\leq N,
 \label{eq:HankelPosDefJacobi}
\end{equation}
proving that $H$ is real, symmetric, and positive semidefinite (see Lemma~\ref{lem:hamburger}) 
since $\alpha+\beta>-1$ and $\gamma-\alpha>-1$. A similar analysis to that in Section~\ref{sec:HankelLowRank}
shows that $H$ can be approximated, up to an error of $\epsilon$, by a rank $\mathcal{O}(\log N\log(1/\epsilon))$ matrix. Therefore, the 
$\mathcal{O}(N(\log N)^2)$ complexity algorithm summarized in Figure~\ref{fig:Summary} can be employed for Jacobi conversion in this case. 

The {\tt jac2jac} algorithm in Chebfun and FastTransforms.jl implements this and the other three cases.  In Figure~\ref{fig:jac2jacResults} (left) 
we test the accuracy of our algorithm by using it to compute $\smash{\underline{c}^{(-1/4,\sqrt{2}/2)} = A\underline{c}^{(0,\sqrt{2}/2)}}$
for various decay rates in the vector $\smash{\underline{c}^{(0,\sqrt{2}/2)}}$.  In Figure~\ref{fig:jac2jacResults} (right) we compare the 
execution time of this algorithm and a direct approach. We observe that our algorithm is faster in this case when $N \geq 512$. 

\begin{figure} 
\begin{minipage}{.49\textwidth} 
\begin{overpic}[width=\textwidth]{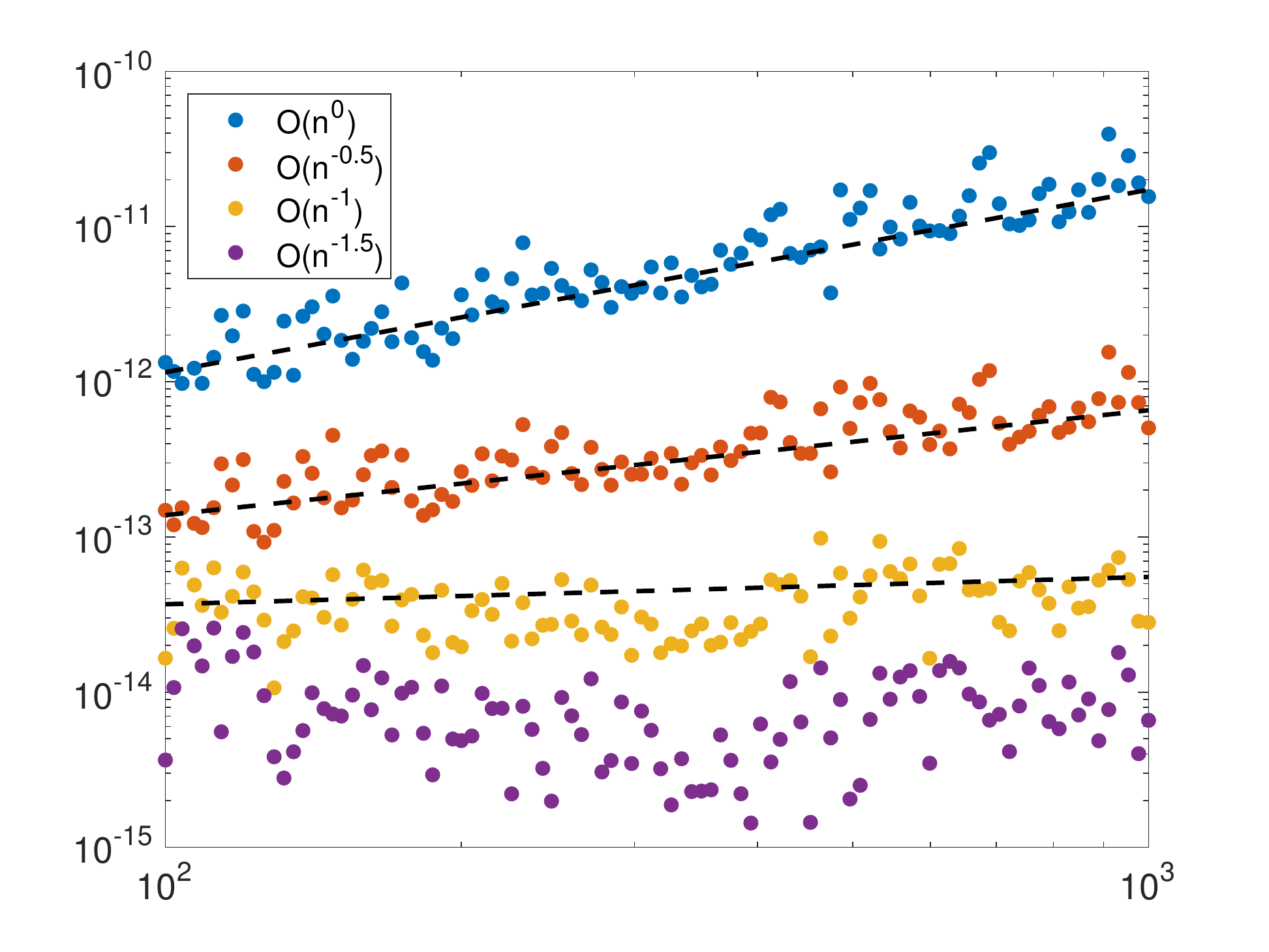}
\put(50,0) {$N$}
\put(0,22) {\rotatebox{90}{Absolute error}}
\put(45,57) {\rotatebox{14}{$\mathcal{O}(N\log N)$}}
\put(45,43) {\rotatebox{8}{$\mathcal{O}(\sqrt{N}\log N)$}}
\put(45,31) {\rotatebox{3}{$\mathcal{O}(\log N)$}}
\end{overpic} 
\end{minipage}
\begin{minipage}{.49\textwidth}
\begin{overpic}[width=\textwidth]{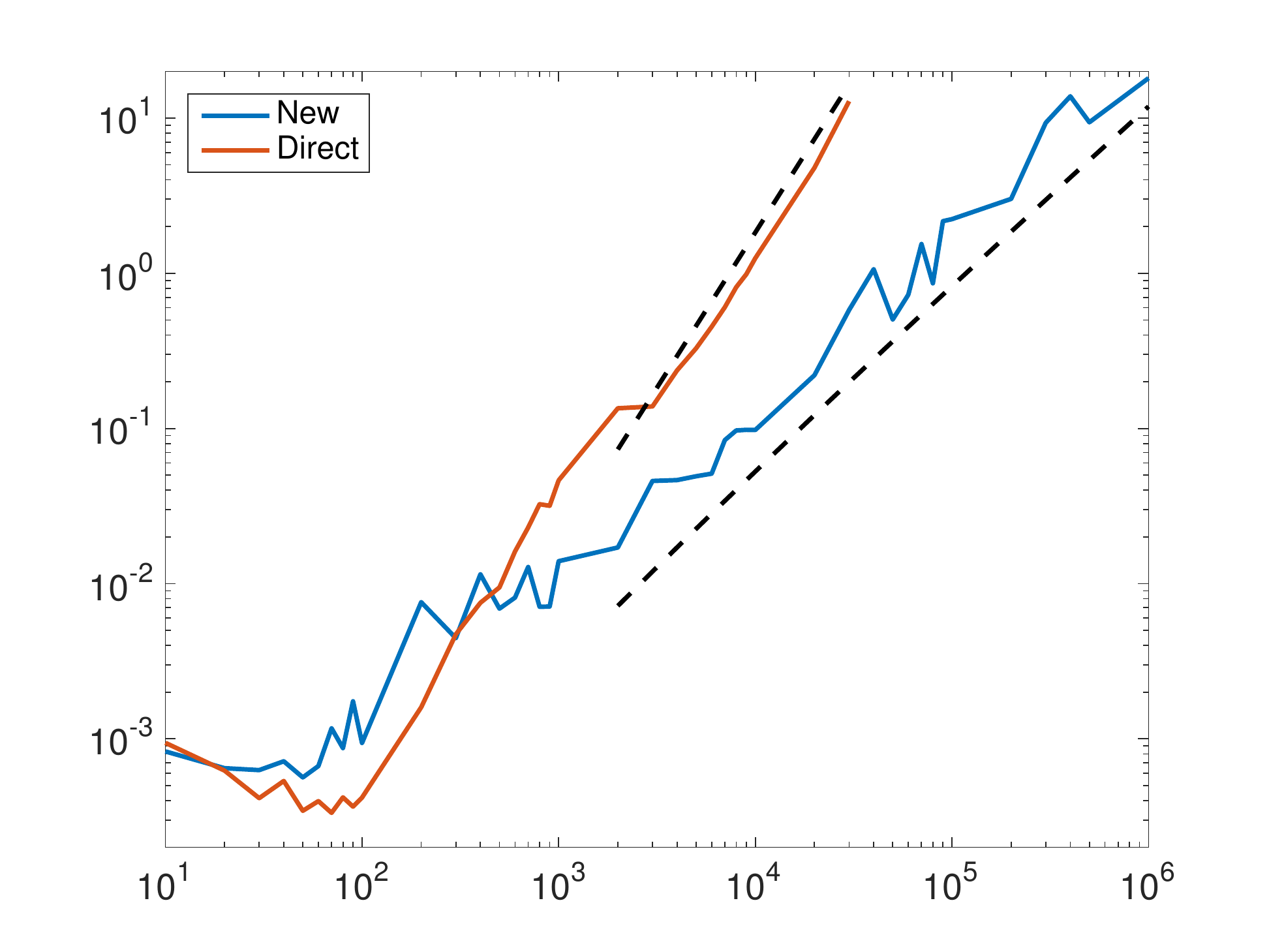}
\put(45,45) {\rotatebox{65}{$\mathcal{O}(N^2)$}} 
\put(60,30) {\rotatebox{45}{$\mathcal{O}(N(\log N)^2)$}} 
\put(0,10) {\rotatebox{90}{Execution time (seconds)}}
\put(50,0) {$N$}
\end{overpic} 
\end{minipage} 
\caption{Left: Errors in computing $\smash{\underline{c}^{(-1/4,\sqrt{2}/2)} = A\underline{c}^{(0,\sqrt{2}/2)}}$ with various decay rates in $\smash{\underline{c}^{(0,\sqrt{2}/2)}}$. Arbitrary vectors are generated in precisely the same manner as in sections~\ref{sec:NumericalResults} and Section~\ref{sec:cheb2leg}. The dashed lines show the observed error growth for the various decay rates. 
Right: Execution times between the direct (yellow), ASY (red), and the new algorithm described in this paper (blue).}
\label{fig:jac2jacResults}
\end{figure} 

\subsubsection*{Case 3: $\mathbf{|\alpha-\gamma|<1}$ and $\mathbf{\alpha+\beta}\leq -1$}
This is a situation where the Jacobi conversion matrix in $\underline{c}^{(\gamma,\beta)} = A\underline{c}^{(\alpha,\beta)}$ can be written as a diagonally-scaled Toeplitz-dot-Hankel matrix; however, the Hankel part is not positive semidefinite (see~\eqref{eq:HankelPosDefJacobi}). This is similar to what happens in Section \ref{sec:cheb2leg}. Indeed, the submatrix $\tilde{H} = \left(H_{jk}\right)_{1\leq j,k\leq N}$ is in fact positive semi-definite because $\alpha + \beta + 1 > -1$. Hence, we can do the same trick when we compute $\underline{c}^{(\gamma,\beta)} = A\underline{c}^{(\alpha,\beta)}$: apply the first row directly, and use the algorithm described in Figure \ref{fig:Summary} on $\left(A_{jk}\right)_{1\leq j,k\leq N}$ for the remaining entries.


\subsubsection{Case 4: $\mathbf{|\alpha-\gamma|>1}$, but the difference is a noninteger}
If $|\alpha-\gamma|>1$, then either $\alpha<\gamma-1$ or $\alpha>\gamma+1$. If $\alpha<\gamma-1$, then we 
convert the Jacobi $(\alpha,\beta)$ expansion to $(\alpha+1,\beta)$ using Case 1, repeating if necessary until $|\alpha-\gamma|<1$. 
Similarly, if $\alpha>\gamma+1$, then we convert the Jacobi  $(\alpha,\beta)$ expansion to $(\alpha-1,\beta)$ using Case 1. Again, repeating 
until $|\alpha-\gamma|<1$. Thus, this case reduces the difference between $\alpha$ and $\gamma$ until the criterion for 
Case $2$ or $3$ is applicable.

\bigskip 

The four cases above are implemented in the {\tt jac2jac} commands in Chebfun~\cite{Chebfun} and FastTransforms.jl~\cite{FastTransforms} with the syntax 
{\tt jac2jac(v,a,b,g,d)}. Based on the particular values of {\tt a}, {\tt b}, {\tt g}, and {\tt d} various cases above are exercised. For all 
parameter ranges the cost of the conversion is at most $\mathcal{O}(N(\log N)^2)$ operations. This algorithm is also employed for 
the commands {\tt jac2cheb} and {\tt cheb2jac} by exploiting the fact that $T_k(x) = P_k^{(-1/2,-1/2)}(x)/P_k^{(-1/2,-1/2)}(1)$~\cite[(18.7.3)]{NISTHandbook}. One can also compute the Jacobi-to-Chebyshev and Chebyshev-to-Jacobi conversions in $\mathcal{O}(N(\log N)^2/\log\log N)$ 
operations using asymptotic expansions of Jacobi polynomials~\cite{slevinsky2016use}. 
 
\subsection{Laguerre-to-Laguerre conversion}\label{sec:AdditionalConversions}
We are not aware of major applications for Laguerre-to-Laguerre conversions, 
though related conversions are discussed in~\cite{Chriikjian_16_01}.  Due to the simplicity of the conversion, 
we include it in this section. 
 
 For $\alpha>-1$ the generalized Laguerre polynomial of degree $k$ is given by $L_k^{(\alpha)}(x) = \frac{\Gamma(n+\alpha+1)}{\Gamma(\alpha+1)\Gamma(n+1)}{}_1F_1(-n,\alpha+1;x)$~\cite[Tab.~18.5.12]{NISTHandbook}. The sequence $L_0^{(\alpha)}, L_1^{(\alpha)},\ldots$ forms a family of polynomials that are orthogonal with respect to the weight function $x^\alpha e^{-x}$ on $[0,\infty)$. 
 
 Suppose that $\alpha_1>-1$ and $\alpha_2>-1$. Then, there is an 
 upper-triangular matrix, $A$, that converts expansion coefficients in the $L^{(\alpha_1)}$ basis 
 to coefficients in the $L^{(\alpha_2)}$ basis.  If $\alpha_1-\alpha_2$ is an integer, then an $\mathcal{O}(N)$ complexity
 algorithm for computing the matrix-vector product is almost immediate thanks to the recurrence relation given in~\cite[(18.9.13)]{NISTHandbook}. 
 If $\alpha_1-\alpha_2$ is not an integer, then there is an explicit formula for the entries of $A$ given by~\cite[(18.18.18)]{NISTHandbook}
 \[
  A_{jk} = \begin{cases} \frac{1}{\Gamma(\alpha_1 - \alpha_2)} \frac{\Gamma(k-j+\alpha_1-\alpha_2)}{\Gamma(k-j+1)}, & 0\leq j\leq k\leq N,\\ 0, & \text{otherwise}.\end{cases}
 \]
 One observes that this conversion matrix is a diagonally-scaled Toeplitz matrix, which is also a diagonally-scaled Toeplitz-dot-Hankel matrix by taking the 
 Hankel part as the matrix of all ones. A fast $\mathcal{O}(N \log N)$ algorithm follows by a fast Toeplitz matrix-vector product based on the FFT~\cite{Ng_04_01}.

\section*{Conclusion} 
Many of the standard conversion matrices for converting between expansions coefficients 
in orthogonal polynomial basis can be written as a diagonally-scaled Hadamard product 
between a Toeplitz and Hankel matrix. This leads to an $\mathcal{O}(N(\log N)^2)$ complexity 
for basis conversion for a polynomial of degree $N$. The resulting algorithm is conceptually
simple, FFT-based, and requires no precompution, while being competitive in terms of 
computational time with existing fast algorithms. 

\section*{Acknowledgments}
We thank the School of Mathematics and Statistics at The University of Sydney for awarding Alex Townsend a travel grant that 
allowed him to visit the school in February 2016. We thank the Cecil King Foundation and the London Mathematical Society 
for awarding Marcus Webb the Cecil King Travel Scholarship to visit The University of Sydney from January 
to April 2016.  We thank Laurent Demanet and Haihao Lu for serendipitously discussing Toeplitz-dot-Hankel 
matrices with the first author a year earlier. We are grateful to Bernhard Beckermann for a discussion on the 
singular values of real, symmetric, and positive semidefinite Hankel matrices. We also thank Nick Hale and 
Mika\"el Slevinsky for discussions on related topics over many years and providing excellent comments that 
improved this paper and our implementations of the algorithms.

\end{document}